\setlist[itemize]{topsep=0ex,itemsep=0ex,parsep=0ex}
\setlist[enumerate]{topsep=0ex,itemsep=0ex,parsep=0ex}
\crefname{lem}{Lemma}{Lemmas}
\crefname{thm}{Theorem}{Theorems}
\crefname{ques}{Question}{Theorems}
\crefname{cor}{Corollary}{Corollaries}
\crefname{enumi}{Item}{Items}
\newcommand{\defn}[1]{\textcolor{Maroon}{\emph{#1}}}
\def\NAT@spacechar{~}
\renewcommand{\baselinestretch}{1.1}
\renewcommand{\epsilon}{\varepsilon}
\renewcommand{\emptyset}{\varnothing}
\renewcommand{\geq}{\geqslant}
\renewcommand{\leq}{\leqslant}
\DeclareMathOperator{\dist}{dist}
\DeclareMathOperator{\tw}{tw}
\DeclareMathOperator{\simval}{simval}
\DeclareMathOperator{\pw}{pw}
\newcommand{\RR}{\mathbb{R}}
\newcommand{\PP}{\mathcal{P}}
\newcommand{\GG}{\mathcal{G}}
\newcommand{\HH}{\mathcal{H}}
\newcommand{\NN}{\mathbb{N}}
\renewcommand{\thefootnote}{\fnsymbol{footnote}}
\theoremstyle{plain}
\newtheorem{thm}{Theorem}
\newtheorem{lem}[thm]{Lemma}
\newtheorem{cor}[thm]{Corollary}
\newtheorem{prop}[thm]{Proposition}
\newtheorem{obs}[thm]{Observation}
\crefname{obs}{Observation}{Observations}
\newtheorem*{lem*}{Lemma}
\theoremstyle{definition}
\newtheorem*{conj*}{Conjecture}
\date{}
\begin{document}

\title{\bf\fontsize{18pt}{18pt}\selectfont Graphs that are Quasi-Isometric to Graphs with Bounded Treewidth}

\author{Robert Hickingbotham\,\footnotemark[1]	}

\footnotetext[1]{Univ. Lyon, ENS de Lyon, UCBL, CNRS, LIP, France (\texttt{rd.hickingbotham@gmail.com}).}

\maketitle
\begin{abstract}
  In this paper, we characterise graphs that are quasi-isometric to graphs with bounded treewidth. Specifically, we prove that a graph is quasi-isometric to a graph with bounded treewidth if and only if it has a tree-decomposition where each bag consists of a bounded number of balls of bounded diameter. This result extends a characterisation by Berger and Seymour (2024) of graphs that are quasi-isometric to trees. Additionally, we characterise graphs that are quasi-isometric to graphs with bounded pathwidth and graphs that are quasi-isometric to graphs with bounded linewidth. As an application of these results, we show that graphs with bounded rank-width, graphs with bounded tree independence number, and graphs with bounded sim-width are quasi-isometric to graphs with bounded treewidth.
\end{abstract}

\renewcommand{\thefootnote}{\arabic{footnote}}
\section{Introduction}

Coarse graph theory is an emerging field that explores the global structure of graphs through the lens of Gromov's coarse geometry.\footnote{All graphs are simple, undirected, unweighted and finite unless stated otherwise.} Initiated by \citet{georgakopoulos2023graph}, this area presents numerous open questions, since many classical results from structural graph theory have a potential coarse analogue. Central to this topic is the notion of quasi-isometry, a generalisation of bi-Lipschitz maps that preserves the large-scale geometry of a metric space. A key objective is to identify conditions under which a complex graph is quasi-isometric to a simple graph.

In this paper, we characterise graphs that are quasi-isometric to graphs with bounded treewidth. Treewidth is a fundamental parameter in algorithmic and structural graph theory and is the standard measure of how similar a graph is to a tree. To formally state our results, we need the following definitions.

Let $G$ be a graph. For vertices $u,v\in V(G)$, a \defn{$(u,v)$-path} is a path in $G$ with end-points $u$ and $v$. The \defn{distance} $\dist_G(u,v)$ between $u$ and $v$ in $G$ is the length of the shortest $(u,v)$-path, or infinite if no such path exists. For $q \in \NN$, a \defn{$q$-quasi-isometry} of $G$ into a graph $H$ is a map $\phi \colon V(G) \to V(H)$ such that, for every $u,v\in V(G)$,
\begin{equation*}
    q^{-1} \cdot \dist_G(u,v) - q \leq \dist_{H}(\phi(u), \phi(v))\leq q \cdot \dist_G(u,v) + q,
\end{equation*}
and, for every $x \in V(H)$, there exists a vertex $v \in V(G)$ such that $\dist_{H}(x,\phi(v)) \leq q$. If such a map exists, then we say that $G$ is \defn{$q$-quasi-isometric} to $H$. We say a class of graphs $\GG$ is \defn{quasi-isometric} to a class of graphs $\mathcal{H}$ if there exists $q\in \NN$ such that every graph in $\GG$ is $q$-quasi-isometric to a graph in $\HH$.

A \defn{$T$-decomposition} of a graph $G$ is a pair ${(T,\beta)}$ where $T$ is a tree such that:
 \begin{itemize}
     \item ${\beta \colon V(T) \to 2^{V(G)}}$ is a function;
     \item for every edge ${uv \in E(G)}$, there exists a node ${x \in V(T)}$ with ${u,v \in \beta(x)}$; and 
     \item for every vertex ${v \in V(G)}$, the set $\{ x \in V(T) \colon v \in \beta(x) \}$ induces a non-empty connected subtree of~$T$. 
 \end{itemize}
We call $\beta(t)$ a \defn{bag} of the $T$-decomposition. The \defn{width} of ${(T,\beta)}$ is ${\max\{ \lvert \beta(x)\rvert \colon x \in V(T) \}-1}$. The \defn{treewidth}~$\tw(G)$ of $G$ is the minimum width of a $T$-decomposition of~$G$ for any tree $T$. The \defn{pathwidth}~$\pw(G)$ of $G$ is the minimum width of a $P$-decomposition of~$G$ for any path $P$.

For a set $X\subseteq V(G)$, the \defn{weak diameter} of $X$ is the maximum distance (in $G$) between any pair of vertices in $X$. For $d,k\in \NN$, a set $S\subseteq V(G)$ is \defn{$(k,d)$-centred} if $S$ has a partition $S_1,\dots, S_j$ into $j\leq k$ sets such that each $S_i$ has weak diameter at most $d$. A tree-decomposition of $G$ is \defn{$(k,d)$-centred} if each bag of the tree-decomposition is $(k,d)$-centred. 

Our main result states that $(k,d)$-centred tree-decompositions characterise graphs that are quasi-isometric to graphs with bounded treewidth.

\begin{thm}\label{CharacterisationThmTW}
    There is a function $q\colon \NN^2 \to \NN$ such that, for every $c,d,k\in \NN$:
    \begin{itemize}
        \item Every graph that has a $(k,d)$-centred tree-decomposition is $q(k,d)$-quasi-isometric to a graph with treewidth at most $2k-1$. 
        \item Every graph that is $c$-quasi-isometric to a graph with treewidth at most $k$ has a $(k+1,3c^2)$-centred tree-decomposition.
    \end{itemize}
\end{thm}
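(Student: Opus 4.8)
The plan is to prove the two bullets separately; the second (quasi-isometry $\Rightarrow$ centred decomposition) is a clean pullback, while the first (centred decomposition $\Rightarrow$ quasi-isometry) carries the real work. For the second bullet, let $\phi\colon V(G)\to V(H)$ be a $c$-quasi-isometry with $\tw(H)\le k$, and fix a tree-decomposition $(T,\beta)$ of $H$ of width $\le k$, so every bag has at most $k+1$ vertices. I would pull $(T,\beta)$ back along $\phi$, thickening each bag by a ball of radius $c$ to repair the edge axiom: set $\gamma(x)=\{v\in V(G): \beta(x)\cap B_H(\phi(v),c)\ne\emptyset\}$, where $B_H(\cdot,c)$ is the ball of radius $c$ in $H$. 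Coverage is immediate since $\phi(v)$ lies in some bag; connectivity holds because each ball $B_H(\phi(v),c)$ is connected, so the nodes whose bag meets it form a subtree; and for $uv\in E(G)$ one has $\dist_H(\phi(u),\phi(v))\le 2c$, so some vertex $w$ on a shortest $\phi(u)$–$\phi(v)$ path lies within distance $c$ of both, and any bag containing $w$ witnesses $u,v\in\gamma(x)$. Finally I would partition each $\gamma(x)$ according to a nearest witnessing vertex of $\beta(x)$; this gives at most $k+1$ parts, and two vertices $u,v$ in one part satisfy $\dist_H(\phi(u),\phi(v))\le 2c$, whence $c^{-1}\dist_G(u,v)-c\le 2c$ and so $\dist_G(u,v)\le 3c^2$, giving the $(k+1,3c^2)$-centred decomposition.

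For the first bullet, I would reduce to a partitioning problem: it suffices to partition $V(G)$ into parts of weak diameter $O_k(d)$ so that every bag of the given $(k,d)$-centred decomposition $(T,\beta)$ meets at most $2k$ parts. Indeed, letting $H$ be the graph on these parts with two parts adjacent when an edge of $G$ joins them, and letting $\phi$ send each vertex to its part, the induced decomposition (bag at $x$ equals the set of parts meeting $\beta(x)$) has width at most $2k-1$, and $\phi$ is a quasi-isometry with constant controlled by the diameter bound, since contraction only decreases distances while each step in $H$ lifts to a bounded walk in $G$. To build the partition, I would root $T$, let the top node of a vertex be the node of its subtree closest to the root, and grow parts greedily down the tree along the strands of persisting clusters (clusters in adjacent bags are linked when they share a vertex), assigning each vertex to the current part of its strand at its top node and closing a part to start a fresh one whenever its $G$-diameter would exceed a threshold $D=\Theta(d)$. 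With $D$ suitably larger than $d$, each cluster (of weak diameter $\le d$) straddles at most one cut and hence meets at most two parts; as each bag has at most $k$ clusters, every bag meets at most $2k$ parts, which is the source of the bound $2k-1$.

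The main obstacle is precisely this construction. The cluster-adjacency structure branches and merges along $T$ rather than forming disjoint linear strands, so a naive contraction either over-merges, collapsing genuinely far-apart regions as a long cycle would show, or fails to bound the number of parts per bag. The delicate point is to control simultaneously the weak diameter of each part and the at-most-two-parts-per-cluster straddle bound, in the presence of this branching and of vertices that persist across many bags. I expect to handle this by a careful top-down induction along $T$ that anchors each part at a base vertex and charges diameter growth to genuine movement in $G$, together with a verification that the resulting parts induce connected subtrees of $T$, so that the pulled-back decomposition is valid.
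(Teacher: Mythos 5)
Your proof of the second bullet is correct and is essentially the paper's own argument (its Lemma~\ref{Quasi2kd}): pull back the bags via balls of radius $c$ around images, use connectivity of balls for the subtree condition, the midpoint of a shortest $\phi(u)$--$\phi(v)$ path for the edge condition, and partition each pulled-back bag by witnessing vertices of the original bag to get $k+1$ parts of weak diameter at most $3c^2$. No issues there.

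The first bullet, however, has a genuine gap, and you have correctly located it yourself: everything rests on constructing a partition of $V(G)$ into parts of weak diameter $O_k(d)$ such that every bag meets at most $2k$ parts, and you do not construct it. Your greedy ``strands of persisting clusters'' scheme founders on exactly the problems you list --- the cluster structure branches and merges along $T$, the $(k,d)$-centred hypothesis gives no coherence between the cluster partitions of adjacent bags, and a diameter-threshold cutoff does not by itself prevent a single cluster from meeting many parts or prevent over-merging. Deferring this to ``a careful top-down induction along $T$'' is naming the difficulty, not resolving it; as written, the core of the theorem is unproved. It is worth seeing how the paper sidesteps precisely this construction. It never builds the partition by hand along $T$. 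Instead it first modifies the \emph{graph}: add an edge between any two vertices that share a bag and are within distance $d$ in $G$ (Lemma~\ref{DecompToInd}). This is a $d$-quasi-isometry via the identity map, and it turns each cluster into a clique, so each bag becomes a union of at most $k$ cliques, i.e.\ the decomposition now has independence number (hence domination number) at most $k$. Then two imported results finish the job: a lemma of Distel and Norin (Lemma~\ref{WeakDiamColouring}) produces a partition into parts of bounded weak diameter whose quotient is \emph{bipartite}, and a proposition of Dallard et al.\ (Proposition~\ref{IndependentMinor}) says that tree independence number is preserved under taking quotients. A bipartite graph whose bags have independence number at most $k$ has bags of size at most $2k$, giving width $2k-1$. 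In other words, the bound ``at most $2k$ parts per bag'' that you are trying to enforce combinatorially during the construction is instead obtained \emph{after the fact}, from bipartiteness plus preservation of bounded bag independence number under contraction. If you want to salvage your approach, the realistic options are either to prove your partition lemma directly (this is the hard technical content, comparable to the Distel--Norin lemma) or to restructure your argument along the paper's lines, where the edge-augmentation step is the key idea that makes the known results applicable.
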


This result generalises the characterisation of graphs quasi-isometric to trees by \citet{BergerSeymour2024}, who showed that a graph is quasi-isometric to a tree if and only if it is connected and has a tree-decomposition where each bag has bounded weak diameter.

In addition to \cref{CharacterisationThmTW}, we prove an analogous result for graphs that are quasi-isometric to graphs with bounded pathwidth.

\begin{thm}\label{CharacterisationThmPW}
    There is a function $q\colon \NN^2 \to \NN$ such that, for every $c,d,k\in \NN$:
    \begin{itemize}
        \item Every graph that has a $(k,d)$-centred path-decomposition is $q(k,d)$-quasi-isometric to a graph with pathwidth at most $2k-1$.
        \item Every graph that is $c$-quasi-isometric to a graph with pathwidth at most $k$ has a $(k+1,3c^2)$-centred path-decomposition.
    \end{itemize} 
\end{thm}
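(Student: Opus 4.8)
The plan is to follow the proof of \cref{CharacterisationThmTW} almost verbatim, the only new ingredient being the observation that both constructions can be carried out with the host decomposition indexed by a path rather than by an arbitrary tree. I would treat the two bullets in turn.

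For the second bullet, suppose $\phi\colon V(G)\to V(H)$ is a $c$-quasi-isometry and $(P,\gamma)$ is a path-decomposition of $H$ of width at most $k$. On the same path $P$, I would define
\[
\beta(x) \coloneqq \{\,v\in V(G) : \dist_H(\phi(v),\gamma(x))\leq c\,\}.
\]
Coverage is immediate since $\phi(v)\in\gamma(x)$ for some $x$. For edge-coverage, an edge $uv$ of $G$ gives $\dist_H(\phi(u),\phi(v))\leq 2c$, so a vertex $w$ of $H$ within distance $c$ of both lies in some $\gamma(x)$, whence $u,v\in\beta(x)$. For connectivity, $v\in\beta(x)$ holds exactly when $\gamma(x)$ meets the ball $N_c[\phi(v)]$; this ball is connected in $H$ (each of its vertices is joined to $\phi(v)$ by a geodesic lying inside it), and the set of bags of a path-decomposition meeting a connected set is an interval of $P$. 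Finally, partitioning each $\beta(x)$ by assigning $v$ to a nearest vertex of $\gamma(x)$ yields at most $k+1$ parts; two vertices of one part have images within distance $2c$, so the lower quasi-isometry inequality gives $\dist_G(u,v)\leq c(2c+c)=3c^2$. Since $P$ is a path, this is a $(k+1,3c^2)$-centred path-decomposition.

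For the first bullet, given a $(k,d)$-centred path-decomposition $(P,\beta)$ I would reproduce the treewidth construction: choose a representative for each of the $\leq k$ parts in every bag, form the quotient-type graph $H$ on these representatives while retaining the index path $P$, and map each vertex of $G$ to the representative of its part. Because every part has weak diameter at most $d$ and the path-decomposition controls long-range distances, this map is a $q(k,d)$-quasi-isometry; since the resulting decomposition of $H$ again lives over the path $P$, its width is at most $2k-1$ and hence $\pw(H)\leq 2k-1$.

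I expect the first bullet to be the main obstacle. The representatives must be chosen coherently along $P$ so that each occupies an interval of the path—guaranteeing a valid decomposition of $H$—while at most $2k$ of them are active at any node, which is what forces the width bound $2k-1$; the factor of two appears at transitions, where a vanishing part and its replacement must briefly coexist in a common bag. Verifying the two quasi-isometry inequalities, in particular the lower bound (which requires a geodesic in $G$ to be shadowed by a bounded-length walk through consecutive representatives in $H$), is the technical core, but it is identical to the argument behind \cref{CharacterisationThmTW}, and the path structure of the index is preserved at every step.
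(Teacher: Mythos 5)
Your handling of the second bullet is correct and coincides with the paper's own argument (\cref{Quasi2kd}): your bags $\beta(x)$ are exactly the paper's $\bigcup(A_w\colon w\in\gamma(x))$, and your verification of coverage, of connectivity via the connected radius-$c$ balls around $\phi(v)$, and of the $3c^2$ weak-diameter bound for the at most $k+1$ parts is the same computation as in the paper.

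The first bullet, however, contains a genuine gap, and the construction you sketch is not ``the argument behind \cref{CharacterisationThmTW}'' --- that argument never chooses representatives. The immediate problem is that ``the representative of its part'' is undefined: in a $(k,d)$-centred decomposition the partition into at most $k$ sets of weak diameter at most $d$ is chosen separately inside each bag, so a vertex lying in many bags lies in different parts in different bags, and there is no global partition of $V(G)$ to quotient by. Moreover, the parts need not induce connected subgraphs (weak diameter is measured in $G$, not inside the bag), so even a coherently chosen system of parts could not simply be contracted while keeping control of distances. Your explanation of the width bound is also unsupported: the factor $2$ does not come from ``a vanishing part and its replacement coexisting at a transition''. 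The paper's actual route (\cref{MainLemma}) is: (i) add an edge between any two vertices that share a bag and are at distance at most $d$ in $G$; the identity map is then a $d$-quasi-isometry and every bag becomes a union of at most $k$ cliques, i.e.\ the decomposition has independence number at most $k$ (\cref{DecompToInd}); (ii) apply the partition lemma quoted from \cite{DN2022Asymdim} (\cref{WeakDiamColouring}) to obtain a \emph{global} partition of this new graph into connected parts of bounded weak diameter whose quotient is bipartite; (iii) contract that partition --- a quasi-isometry by \cref{QuasiIsometryObservation} --- and use the fact that the independence number of the decomposition is preserved under quotients (\cref{IndependentMinor}); bipartiteness together with independence number at most $k$ forces every bag of the quotient to have at most $2k$ vertices, which is where $2k-1$ comes from. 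All of these steps keep the decomposition indexed by the same path $P$ (the lemmas are stated for an arbitrary tree $T$, and a path is such a tree), which is the one structural point your proposal does identify correctly. As written, though, your first bullet defers its entire technical core to an argument you assert is ``identical'' to one you have not given, so it does not constitute a proof.
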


We also extend the above results to infinite graphs with bounded linewidth. Linewidth is a new parameter introduced by \citet{NSSpathwidth2025} that generalises pathwidth to infinite graphs. A \defn{line} is a set that is linearly ordered by some relation $<$. For a line $L$, an \defn{$L$-decomposition} of a (possibly infinite) graph is a pair $(L,\beta)$ which satisfies the same conditions as in the definition of a $T$-decomposition. The \defn{width} of $(L,\beta)$ is ${\max\{ \lvert \beta(x)\rvert \colon x \in L \}-1}$. The \defn{linewidth} of a (possibly infinite) graph $G$ is the minimum integer $k$ such that $G$ admits an $L$-decomposition of width at most $k$ for some line $L$; if no such decomposition exists, then it is infinite. An $L$-decomposition is \defn{$(k,d)$-centred} if each bag in the decomposition is $(k,d)$-centred. 

For finite graphs, \citet{NSSpathwidth2025} observed that linewidth coincides with pathwidth. However, for infinite graphs, these parameters diverge. For example, the disjoint union of infinitely many one-way infinite paths has infinite pathwidth but linewidth equal to $1$. So linewidth is a more suitable parameter than pathwidth for infinite graphs. Indeed, \citet{CNSSsubtrees2025} showed that a graph has bounded linewidth if and only if all of its finite induced subgraphs have bounded pathwidth. 

In \cref{SectionLineWidth}, we prove the following characterisation for graphs that are quasi-isometric to graphs with bounded linewidth. Graphs in the following theorem statement may be infinite

\begin{thm}\label{CharacterisationThmLW}
    There is a function $q\colon \NN \times \NN \to \NN$ such that, for every $c,d,k\in \NN$:
    \begin{itemize}
        \item Every graph that has a $(k,d)$-centred line-decomposition is $q(k,d)$-quasi-isometric to a graph with linewidth at most $2k-1$.
        \item Every graph that is $c$-quasi-isometric to a graph with linewidth at most $k$ has a $(k+1,3c^2)$-centred line-decomposition.
    \end{itemize} 
\end{thm}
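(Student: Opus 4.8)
The plan is to reduce the linewidth theorem to the already-proven pathwidth theorem (Theorem~\ref{CharacterisationThmPW}) by exploiting the compactness characterisation of linewidth via finite induced subgraphs, established by \citet{CNSSsubtrees2025}, together with a standard compactness/limit argument to pass from finite approximations to a global line-decomposition of a possibly infinite graph.

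For the first direction, suppose $G$ has a $(k,d)$-centred line-decomposition $(L,\beta)$. I would build a target graph $H$ directly from this decomposition, mimicking the construction used for the pathwidth case: for each bag $\beta(x)$ that is partitioned into $\leq k$ sets of weak diameter $\leq d$, introduce a bounded number of representative vertices (one per part), and connect consecutive representatives along the line $L$. The key point is that the construction used in Theorem~\ref{CharacterisationThmPW} should be purely local in the line order: one never uses that $L$ is a path rather than an arbitrary linear order, so the same representative-graph $H$ will admit a line-decomposition (indexed by $L$) of width at most $2k-1$. I would then verify that the natural map $\phi$ sending each vertex $v$ to a representative of the part containing it is a $q(k,d)$-quasi-isometry, using the weak-diameter bound $d$ to control distortion exactly as in the finite case. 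Since none of the distance estimates depend on finiteness, the same function $q(k,d)$ works.

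\smallskip

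For the converse, suppose $G$ is $c$-quasi-isometric to a graph $H$ with $\linewidth(H)\leq k$. By the result of \citet{CNSSsubtrees2025}, every finite induced subgraph of $H$ has pathwidth at most $k$. I would then argue that every finite ``ball-bounded'' piece of $G$ can be handled by Theorem~\ref{CharacterisationThmPW}: more precisely, for each finite $F \subseteq V(G)$, the restriction of the quasi-isometry to $F$ lands (coarsely) inside a finite induced subgraph of $H$ of pathwidth $\leq k$, yielding a $(k+1,3c^2)$-centred path-decomposition of the corresponding finite piece of $G$. The main work is to assemble these local finite path-decompositions into a single global line-decomposition of all of $G$ with the same parameters $(k+1, 3c^2)$. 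Here I would invoke the characterisation that bounded linewidth is equivalent to all finite induced subgraphs having bounded pathwidth, applied now to an auxiliary ``centred'' graph: form the quotient-type graph $G'$ whose construction records the centred structure, show each finite induced subgraph of $G'$ has pathwidth $\leq k$, and conclude $\linewidth(G') \leq k$, which translates back to a $(k+1,3c^2)$-centred line-decomposition of $G$.

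\smallskip

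The hard part will be the converse, specifically the passage from ``every finite induced subgraph has a good path-decomposition'' to ``the whole (infinite) graph has a good line-decomposition,'' while simultaneously keeping the centredness parameter $(k+1, 3c^2)$ uniform across all finite pieces. The compactness machinery of \citet{CNSSsubtrees2025} gives the existence of a line-decomposition of bounded width, but I must ensure the limit decomposition inherits the $(k+1,3c^2)$-centred property and not merely bounded width; this requires that the weak-diameter partitions on the finite pieces be chosen coherently (or at least that a compactness argument over partition-labellings with a bounded label set goes through). I expect this coherence argument --- threading the ball-partition data through the limit so that bags remain $(k+1,3c^2)$-centred in the infinite graph --- to be the principal technical obstacle, whereas the quasi-isometry estimates and the first (constructive) direction should follow the finite template almost verbatim.
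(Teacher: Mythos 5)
There are genuine gaps in both directions, and you have the location of the difficulty exactly backwards. For the converse (second bullet), no compactness argument is needed at all: the pullback construction from the finite case applies verbatim to infinite graphs and arbitrary lines. Given a $c$-quasi-isometry $\phi\colon V(G)\to V(H)$ and an $L$-decomposition $(L,\beta')$ of $H$ of width at most $k$, set $A_x=\{v\in V(G)\colon \dist_H(\phi(v),x)\leq c\}$ and $\beta(t)=\bigcup(A_x\colon x\in\beta'(t))$; each $A_x$ has weak diameter at most $3c^2$, each $C_v=\{x\colon \dist_H(\phi(v),x)\leq c\}$ induces a connected subgraph of $H$ so the trace of $v$ on $L$ is an interval, and edges of $G$ are covered because $\dist_H(\phi(u),\phi(v))\leq 2c$. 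None of this uses finiteness; this is exactly the paper's \cref{Quasi2kd}, which is stated for trees \emph{or lines} and for possibly infinite graphs. Your proposed route instead assembles finite path-decompositions of pieces of $G$ via the compactness characterisation of \citet{CNSSsubtrees2025}, and the obstacle you yourself flag is fatal as stated: that characterisation gives a limit decomposition of bounded \emph{width}, but provides no mechanism for the limit bags to inherit the $(k+1,3c^2)$-centred partition data, and your auxiliary graph $G'$ is not specified concretely enough to supply one. You are attempting a hard (and unnecessary) proof of the easy direction.

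The forward direction (first bullet) is where the genuinely new work lies, and your sketch has two concrete problems. First, ``connect consecutive representatives along the line $L$'' is ill-defined: a line here is an arbitrary linear order, possibly dense (e.g.\ $\mathbb{Q}$), so consecutive elements need not exist. Second, the claim that the pathwidth construction is ``purely local in the line order'' is not true of the actual proof of \cref{CharacterisationThmPW}: that proof does not use per-part representatives at all, but rather (i) adds edges inside bags between vertices at distance at most $d$ to get a decomposition with independence number at most $k$ (\cref{DecompToInd}), and then (ii) invokes \cref{WeakDiamColouring} — a result about graphs with a tree-decomposition of bounded \emph{domination number} — to partition $G$ into parts of bounded weak diameter whose quotient is bipartite, so that after contraction the bags have at most $2k$ vertices. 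Step (ii) is a global statement proved for finite graphs, and it does not transfer to infinite graphs or lines for free. This is precisely the point where the paper must do something new: it proves \cref{LineWidth2Colouring} by a compactness argument in the \emph{other} direction — every finite induced subgraph of $G$ inherits bounded path independence number, hence the class of finite induced subgraphs has asymptotic dimension at most $1$, and by \cref{Compact} (Theorem~A.2 of \citet{bonamy2023asymptotic}) so does $G$ itself, which yields the required $2$-colouring with monochromatic components of bounded weak diameter. Your proposal contains no substitute for this step, so the first bullet does not go through as sketched.
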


Our motivation for establishing the above characterisations is to make quasi-isometry more tractable to work with. To demonstrate this, we use \cref{CharacterisationThmTW} to show that several well-studied graph classes are quasi-isometric to graphs with bounded treewidth. 

\begin{thm}\label{ClassesQuasi}
    The following graph classes are quasi-isometric to graphs of bounded treewidth:
    \begin{itemize}
        \item Graphs with bounded rank-width;
        \item Graphs with bounded tree independence number;
        \item Graphs with bounded mim-width; and
        \item Graphs with bounded sim-width.
    \end{itemize}
\end{thm}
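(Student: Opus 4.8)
The plan is to prove all four statements uniformly: for each graph in the relevant class I would exhibit a $(k,d)$-centred tree-decomposition with $k,d$ bounded in terms of the class parameter, and then invoke the first bullet of \cref{CharacterisationThmTW}, which converts such a decomposition into a $q(k,d)$-quasi-isometry to a graph of treewidth at most $2k-1$. It is convenient to collapse three of the four classes into one case. Since bounded rank-width implies bounded mim-width (Vatshelle), and since the sim-width of every cut is at most its mim-width — a $G$-induced crossing matching is in particular an induced matching of the bipartite graph of crossing edges — bounded rank-width and bounded mim-width both imply bounded sim-width. So it suffices to treat sim-width and tree-independence number separately.

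For sim-width I would fix a branch-decomposition $(T,\mathcal{L})$ of $G$, with $T$ subcubic and $\mathcal{L}$ a bijection from $V(G)$ to the leaves of $T$, of sim-width at most $k$; thus every edge of $T$ induces a cut $(A,B)$ admitting no $G$-induced matching of more than $k$ edges crossing it. For $S\subseteq V(G)$ write $\partial S=\{v\in S: v \text{ has a neighbour in } V(G)\setminus S\}$. The crux is the greedy claim that each $\partial A$ is $(k,6)$-centred. Indeed, let $R\subseteq\partial A$ be a maximal set of vertices pairwise at distance greater than $3$ in $G$; choosing for each $a\in R$ a neighbour $b_a\in B$, the edges $\{ab_a : a\in R\}$ form a $G$-induced matching crossing $(A,B)$ — distance greater than $3$ between any two chosen $A$-endpoints rules out every unwanted edge $aa'$, $ab_{a'}$, and $b_ab_{a'}$ — so $|R|\leq k$. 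By maximality every vertex of $\partial A$ lies within distance $3$ of $R$, and grouping $\partial A$ by a nearest element of $R$ partitions it into at most $k$ sets, each contained in a ball of radius $3$ and hence of weak diameter at most $6$.

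It then remains to assemble these boundaries into a genuine tree-decomposition. I would keep the tree $T$ and, for each vertex $v$, let $T_v$ be the union of the $T$-paths joining $\mathcal{L}(v)$ to $\mathcal{L}(u)$ over all neighbours $u$ of $v$; this is a subtree, since all of these paths share the leaf $\mathcal{L}(v)$. Setting $v\in\beta(x)$ iff $x\in T_v$ gives the connectivity axiom at once, and edge-coverage holds because for $uv\in E(G)$ the common path lies in both $T_u$ and $T_v$. Unwinding the definition at an internal node $x$: deleting $x$ splits $V(G)$ into the three parts $P_1,P_2,P_3$ reaching the three components of $T-x$, and $\beta(x)=\partial P_1\cup\partial P_2\cup\partial P_3$, where each $(P_i,V(G)\setminus P_i)$ is a cut of the branch-decomposition. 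By the claim each $\partial P_i$ is $(k,6)$-centred, so each bag is $(3k,6)$-centred. This conversion — reconciling the branch-decomposition on which sim-width is defined with the tree-decomposition demanded by \cref{CharacterisationThmTW}, while certifying that the centred structure of the individual cut-boundaries survives taking unions — is the step I expect to require the most care.

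The tree-independence number case is a simpler variant of the same idea. Given a tree-decomposition $(T,\beta)$ in which every bag satisfies $\alpha(G[\beta(x)])\leq k$, a maximal set of pairwise non-adjacent vertices of $\beta(x)$ has size at most $k$ and, by maximality, dominates $\beta(x)$; grouping $\beta(x)$ by a dominating neighbour shows that every bag is $(k,2)$-centred. In all four cases the resulting decomposition is $(k',d')$-centred with $k',d'$ bounded by the class parameter, and \cref{CharacterisationThmTW} completes the proof.
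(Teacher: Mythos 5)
Your proposal is correct and, at its core, it is the same proof as the paper's: the paper also funnels rank-width and mim-width into sim-width, builds exactly the tree-decomposition you describe on the branch-decomposition tree (phrased there as adding both ends of each edge $uv\in E(G)$ to every bag on the path between $\mathcal{L}(u)$ and $\mathcal{L}(v)$, which is precisely your $T_v$ construction, with the same identification of internal bags as unions of three cut boundaries), and finishes by invoking the first bullet of \cref{CharacterisationThmTW}. There are two genuine local differences. (i) For tree independence number the paper does not argue directly as you do: it invokes the cited implication that bounded tree independence number gives bounded sim-width, whereas your one-line observation that a maximal independent set in a bag dominates it yields $(k,2)$-centred bags immediately --- more elementary and self-contained. (ii) To certify that bags are centred, the paper proves \cref{LemmaSimWidth}: each bag has domination number at most $6k$, obtained from a maximum induced crossing matching whose endpoints are claimed to dominate each part $A_i^t$, after which bounded domination number gives centredness. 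You instead run a packing argument: a maximal $3$-separated set $R\subseteq\partial A$ yields an induced crossing matching, so $|R|\leq k$ and $\partial A$ is $(k,6)$-centred. Your variant is arguably the more robust of the two: the paper's claim that every vertex of $A_1^t$ is adjacent to a matching endpoint needs care, since when one tries to extend the matching by a crossing edge $vw$ the blocking adjacency may be incident to $w$ rather than $v$, whereas your distance-greater-than-$3$ separation rules out all unwanted edges simultaneously; it also yields treewidth at most $6k-1$ where the paper's \cref{SimWidthQuasi} gets $12k-1$. Only trivial housekeeping is missing from your write-up: isolated vertices make $T_v$ empty (the paper explicitly reduces to the case where every vertex is incident to an edge), and the leaf and degree-two bags should be noted to be $(1,2)$- and $(2k,6)$-centred, respectively.
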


\citet{Gromov1993} introduced asymptotic dimension as a quasi-isometry invariant of metric spaces; see \cite{BellDranishnikov2008} for a survey. 

We now formally define asymptotic dimension. Let $G$ be a (possibly infinite) graph. For $\delta\geq 0$, a collection $\mathcal{U}$ of subsets of $V(G)$ is \defn{$\delta$-disjoint} if $\dist_G(X,Y)>\delta$ for all distinct $X,Y\in \mathcal{U}$. More generally, we say that it is \defn{$(k,\delta)$-disjoint} if $\mathcal{U}=\bigcup(\mathcal{U}\colon i\in [k])$ where for each $i\in [k]$, $\mathcal{U}_i$ is a $\delta$-disjoint subcollection of $\mathcal{U}$. A function $f\colon \RR^+ \to \RR^+$ is a \defn{$d$-dimensional control function} for $G$ if, for every $\delta \geq 0$, there is a $(d+1,\delta)$-disjoint partition $\mathcal{U}$ of $V(G)$ into sets with weak-diameter at most $f(\delta)$ in $G$. The \defn{asymptotic dimension} of a class of (possibly infinite) graphs $\GG$ is the minimum $d$ for which there exists a $d$-dimensional control function $f\colon \RR^+ \to \RR^+$ for all graphs $G\in \GG$, or infinite if no such $d$ exists. 

\citet[Theorem~1.2]{bonamy2023asymptotic} showed that every graph class with bounded treewidth has asymptotic dimension at most $1$. Therefore, each of the graph classes in \cref{ClassesQuasi} have asymptotic dimension at most $1$.

\begin{cor}\label{ClassesAsymptotic}
    The following graph classes have asymptotic dimension at most $1$:
    \begin{itemize}
        \item Graphs with bounded rank-width;
        \item Graphs with bounded tree independence number;
        \item Graphs with bounded mim-width; and
        \item Graphs with bounded sim-width.
    \end{itemize}
\end{cor}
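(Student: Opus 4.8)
The plan is to deduce \cref{ClassesAsymptotic} from \cref{ClassesQuasi} together with the result of \citet{bonamy2023asymptotic} that graphs of bounded treewidth have asymptotic dimension at most $1$, using that asymptotic dimension is a quasi-isometry invariant. The only point not already supplied is a transfer lemma making this invariance explicit at the level of graph classes, with the uniform control function tracked through the quasi-isometry constants. So I would first establish the following: if a class $\GG$ is $q$-quasi-isometric to a class $\HH$, and $\HH$ has asymptotic dimension at most $d$ witnessed by a $d$-dimensional control function $f$, then $\GG$ also has asymptotic dimension at most $d$.

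To prove this, fix $G\in \GG$ with a $q$-quasi-isometry $\phi\colon V(G)\to V(H)$ into some $H\in \HH$, and fix $\delta\geq 0$. Apply the control function $f$ for $H$ at scale $\delta'\coloneqq q(\delta+1)$, obtaining a $(d+1,\delta')$-disjoint partition $\mathcal{U}$ of $V(H)$ into sets of weak diameter at most $f(\delta')$. Pull this back to $G$: since $\mathcal{U}$ partitions $V(H)$ and $\phi$ is defined on all of $V(G)$, the family $\{\phi^{-1}(U)\colon U\in\mathcal{U}\}$ (discarding empty members) partitions $V(G)$. For the weak diameter, if $u,v\in\phi^{-1}(U)$ then $\dist_H(\phi(u),\phi(v))\leq f(\delta')$, so $q^{-1}\dist_G(u,v)-q\leq f(\delta')$ and hence $\dist_G(u,v)\leq q(f(\delta')+q)$. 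For disjointness, let $\mathcal{U}_i$ be a $\delta'$-disjoint subcollection and let $X=\phi^{-1}(U)$, $Y=\phi^{-1}(U')$ be distinct nonempty preimages with $U,U'\in\mathcal{U}_i$; for any $x\in X$, $y\in Y$ we have $\dist_H(\phi(x),\phi(y))>\delta'$, whence $q\dist_G(x,y)+q>\delta'$ and so $\dist_G(x,y)>\delta'/q-1=\delta$. Thus the pulled-back partition is $(d+1,\delta)$-disjoint with weak diameter at most $q(f(\delta')+q)$. Crucially, both the scale $\delta'$ and this bound depend only on $q$, $\delta$ and $f$, not on $G$, so $\delta\mapsto q(f(q(\delta+1))+q)$ is a single $d$-dimensional control function valid for all of $\GG$, establishing the transfer lemma.

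With this in hand, \cref{ClassesAsymptotic} is immediate: by \cref{ClassesQuasi} each of the four classes is quasi-isometric, with a uniform constant $q$, to a class of bounded treewidth, which by \citet{bonamy2023asymptotic} has asymptotic dimension at most $1$; applying the transfer lemma with $d=1$ carries this bound back to each class.

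The only genuinely delicate point, and hence the main obstacle, is the bookkeeping for uniformity. Asymptotic dimension of a class demands one control function valid simultaneously for every graph in the class, so I must ensure the rescaling $\delta\mapsto\delta'$ and the resulting weak-diameter bound are governed purely by the quasi-isometry constant $q$ (provided uniformly by \cref{ClassesQuasi}) and by $f$ (uniform across bounded-treewidth graphs by \citet{bonamy2023asymptotic}), and never by the individual graph $G$. Everything else is a routine translation of the two quasi-isometry inequalities, and the argument is exactly the class-level refinement of the standard quasi-isometry invariance of asymptotic dimension of \citet{Gromov1993}.
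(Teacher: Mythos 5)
Your proposal is correct and follows exactly the paper's route: the paper likewise deduces \cref{ClassesAsymptotic} from \cref{ClassesQuasi} together with the theorem of \citet{bonamy2023asymptotic} that bounded-treewidth classes have asymptotic dimension at most $1$, leaving the quasi-isometry invariance of asymptotic dimension implicit (it is cited as Gromov's motivation for the notion). Your explicit transfer lemma, including the uniform control function $\delta\mapsto q\bigl(f(q(\delta+1))+q\bigr)$, is a correct and careful writing-out of that implicit step.
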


\citet{Benjamini2012} previously studied vertex-transitive graphs that are quasi-isometric to graphs with bounded treewidth in the context of separation profile. \citet{dragan2014collective} studied $(k,r)$-centred tree-decomposition in the context of designing collective additive tree spanners in graphs.

\textbf{Note:}
While preparing this manuscript, \citet{NSS2025treewidth} announced an independent proof of \cref{CharacterisationThmTW,CharacterisationThmPW} where they achieve tight bounds on the width. The strength of our proof lies in its conceptual simplicity. This is good news for coarse graph theory since most proofs in this area are long and technical. Moreover, our results extend to infinite graphs with bounded linewidth where it is unclear whether that is the case for their results.

\section{Treewidth and Pathwidth}
In this section, we prove \cref{CharacterisationThmTW,CharacterisationThmPW}. See \citet{diestel2017graphtheory} for undefined terms and notation. Our first lemma shows that any graph quasi-isometric to a graph with bounded treewidth/pathwidth/linewidth has a $(k,d)$-centred tree/path/line-decomposition.

\begin{lem}\label{Quasi2kd}
   For every $c,k\in \NN$ and every tree or line $T$, if a (possibly infinite) graph $G$ is $c$-quasi-isometric to a graph $H$ where $H$ has a $T$-decomposition with width at most $k$, then $G$ has a $(k+1,3c^2)$-centred $T$-decomposition.
\end{lem}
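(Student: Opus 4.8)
The plan is to pull the given decomposition of $H$ back along the quasi-isometry. Let $\phi\colon V(G)\to V(H)$ be a $c$-quasi-isometry and let $(T,\beta)$ be a $T$-decomposition of $H$ of width at most $k$, so $\lvert\beta(x)\rvert\leq k+1$ for every $x$. For each node $x$ I would define the pulled-back bag
$$\gamma(x) \coloneqq \{v\in V(G) : \dist_H(\phi(v),\beta(x))\leq c\},$$
where $\dist_H(\phi(v),\beta(x))$ is the minimum distance in $H$ from $\phi(v)$ to a vertex of $\beta(x)$. The bulk of the argument is then to verify that $(T,\gamma)$ is a $(k+1,3c^2)$-centred $T$-decomposition of $G$.

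First I would check that $(T,\gamma)$ is a $T$-decomposition. For the connectivity/interval condition, observe that for a fixed $v$ the set of nodes $x$ with $v\in\gamma(x)$ equals $\bigcup_{w\in B}\{x : w\in\beta(x)\}$, where $B=\{w\in V(H):\dist_H(\phi(v),w)\leq c\}$ is the ball of radius $c$ about $\phi(v)$. Since $B$ is connected in $H$ and each $\{x:w\in\beta(x)\}$ is a nonempty subtree (respectively interval) of $T$ that meets the one for every $H$-neighbour of $w$ (adjacent vertices share a bag), a routine induction along a path in $H[B]$ shows this union is nonempty and connected in $T$; this argument is identical for trees, for lines, and when $B$ is infinite. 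For edge coverage, given $uv\in E(G)$ the upper quasi-isometry bound gives $\dist_H(\phi(u),\phi(v))\leq 2c$; taking $z$ to be a midpoint of a shortest $(\phi(u),\phi(v))$-path in $H$ and any node $x$ with $z\in\beta(x)$, both $\phi(u)$ and $\phi(v)$ lie within distance $c$ of $\beta(x)$, so $u,v\in\gamma(x)$.

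Finally I would show each bag $\gamma(x)$ is $(k+1,3c^2)$-centred. Writing $\beta(x)=\{w_1,\dots,w_m\}$ with $m\leq k+1$, assign each $v\in\gamma(x)$ to one index $i$ with $\dist_H(\phi(v),w_i)\leq c$; this partitions $\gamma(x)$ into at most $k+1$ parts $S_1,\dots,S_m$. For $u,v$ in a common part $S_i$ the triangle inequality gives $\dist_H(\phi(u),\phi(v))\leq 2c$, and the lower quasi-isometry bound $c^{-1}\dist_G(u,v)-c\leq\dist_H(\phi(u),\phi(v))$ then yields $\dist_G(u,v)\leq c(2c+c)=3c^2$, so each part has weak diameter at most $3c^2$ in $G$. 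The one place where care is needed is the choice of radius: a cruder pull-back (placing $v$ in $\gamma(x)$ only when $\phi(v)\in\beta(x)$, or using radius $2c$) either fails to cover edges or inflates the weak diameter to $5c^2$. The midpoint argument is exactly what allows radius $c$ to deliver edge coverage and the sharp $3c^2$ bound simultaneously, so I expect the main obstacle to be this constant bookkeeping, together with confirming that the connectivity step is valid uniformly for finite trees, lines, and the infinite setting.
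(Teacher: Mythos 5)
Your proposal is correct and takes essentially the same approach as the paper: your pulled-back bags $\gamma(x)=\{v\in V(G)\colon \dist_H(\phi(v),\beta(x))\leq c\}$ coincide exactly with the paper's bags $\bigcup(A_w\colon w\in\beta'(x))$, where $A_w$ is the preimage of the radius-$c$ ball around $w$. The three verification steps---connectivity via the ball around $\phi(v)$, edge coverage via the midpoint of a shortest $(\phi(u),\phi(v))$-path, and the $3c^2$ weak-diameter bound from the quasi-isometry inequalities---match the paper's proof step for step (you are in fact slightly more explicit about the connectivity argument and about turning the union of at most $k+1$ balls into a genuine partition).
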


\begin{proof}
    Let $\phi\colon V(G)\to V(H)$ be a $c$-quasi-isometry from $G$ to $H$. Let $(T,\beta')$ be a $T$-decomposition of $H$ with width at most $k$. For each $x\in V(H)$, define 
    $$A_x=\{v\in V(G)\colon \dist_{H}(\phi(v),x)\leq c\}.$$ 
    Then, for all $u,v\in A_x$, we have 
    $$\dist_G(u,v)\leq c\dist_{H}(\phi(u),\phi(v))+c^2\leq c(\dist_{H}(\phi(u),x)+\dist_{H}(\phi(v),x))+c^2\leq 3c^2.$$ 
    For each node $t\in V(T)$, define $\beta(t)=\bigcup (A_x\colon x\in \beta'(t))$. Then $\beta(t)$ is the union of $|\beta'(t)|\leq k+1$ sets where each set has weak diameter at most $3c^2$. 
    
    For each vertex $v\in V(G)$, let $C_v=\{x\in V(H)\colon \dist_{H}(\phi(v),x)\leq c\}$. Then, for each node $t\in V(T)$, $v\in \beta(t)$ if and only if $C_v\cap \beta'(t)\neq \emptyset$. Since $C_v$ induces a non-empty and connected subgraph of $H$, it follows that $\{t\in V(T)\colon v\in \beta(t)\}$ induces a non-empty and connected subtree of $T$ if $T$ is a tree, or a non-empty interval if $T$ is a line. 
    
    Finally, consider an edge $uv\in E(G)$. By the definition of quasi-isometry,
    $$\dist_{H}(\phi(u),\phi(v))\leq c\dist_G(u,v)+c\leq 2c.$$
    Then $u,v\in A_x$ where $x\in V(H)$ is the middle vertex on a shortest $(\phi(u),\phi(v))$-path in $H$. This means that $u,v\in \beta(t)$ whenever $x\in \beta'(t)$. Therefore, $(T,\beta)$ is a $(k+1,3c^2)$-centred $T$-decomposition of $G$.
\end{proof}

Let $G$ be a graph and $(T,\beta)$ be a tree-decomposition of $G$. An \defn{independent set} in $G$ is a set of pairwise non-adjacent vertices. A set $D\subseteq V(G)$ is \defn{dominating} if every vertex in $V(G)\setminus D$ is adjacent to a vertex in $D$. The \defn{independence number} $\alpha(G)$ of $G$ is the maximum size of an independent set in $G$. The \defn{domination number} $\gamma(G)$ of $G$ is the minimum size of a dominating set in $G$. Clearly $\gamma(G)\leq \alpha(G)$. The \defn{independence number} of $(T,\beta)$ is $\max\{\alpha(G[\beta(t)])\colon t\in V(T)\}$ and the \defn{domination number} of $(T,\beta)$ is $\max\{\gamma(G[\beta(t)])\colon t\in V(T)\}$. The \defn{tree/path independence number} of $G$ is the minimum independence number of a tree/path-decomposition of $G$.  

We now outline our proof that every graph with a $(k,d)$-centred tree-decomposition is quasi-isometric to a graph with bounded treewidth. The proof proceeds in two steps. First, we augment the graph by adding edges to vertices that share a common bag and are within distance $d$ of each other. This results in a graph with tree independence number at most $k$. Next, we use a lemma of \citet{DN2022Asymdim} to partition the graph into connected parts of bounded weak diameter which we then contract to make the graph bipartite. The key observation here is that tree independence number is preserved under contractions. Consequentially, the resulting bipartite graph has a tree-decomposition where each bag contains at most $2k$ vertices. Since both operations are preserved under quasi-isometry, this completes the proof. The remainder of this section formalises this argument.
    
\begin{lem}\label{DecompToInd}
    For all $d,k\in \NN$ and every tree $T$, every graph $G$ that has a $(k,d)$-centred $T$-decomposition is $d$-quasi-isometric to a graph $H$ that has a $T$-decomposition with independence number at most $k$. 
\end{lem}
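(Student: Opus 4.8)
The plan is to construct $H$ from $G$ by \emph{adding} edges rather than changing the vertex set, so that the identity map is the desired quasi-isometry. Specifically, let $(T,\beta)$ be a $(k,d)$-centred $T$-decomposition of $G$, where each bag $\beta(t)$ partitions into at most $k$ sets $S_1^t,\dots,S_{k}^t$ each of weak diameter at most $d$ in $G$. I would define $H$ on the same vertex set $V(G)$ by adding, for every node $t$ and every $i$, all edges inside $S_i^t$ (that is, making each $S_i^t$ a clique). The point of this construction is twofold: each added edge joins two vertices that were already at distance at most $d$ in $G$, which will control the distortion; and each set $S_i^t$ becomes a clique, so that in $H[\beta(t)]$ an independent set can contain at most one vertex from each of the $k$ cliques, giving $\alpha(H[\beta(t)])\leq k$.

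First I would verify that $(T,\beta)$ remains a valid $T$-decomposition of $H$: the vertex set is unchanged and the connectivity condition on $\{t : v\in\beta(t)\}$ is inherited verbatim, while every new edge lies inside some $S_i^t\subseteq\beta(t)$ and hence is covered by that same bag. Combined with the clique observation above, this shows $H$ has a $T$-decomposition of independence number at most $k$. Second, I would check that the identity map $V(G)\to V(H)$ is a $d$-quasi-isometry. Since $H\supseteq G$ as edge sets, distances only shrink, so $\dist_H(u,v)\leq\dist_G(u,v)$. For the reverse direction, each edge of $H$ is either an edge of $G$ or joins two vertices within $G$-distance at most $d$; replacing each edge of a shortest $(u,v)$-path in $H$ by a corresponding $G$-path of length at most $d$ gives a $(u,v)$-walk in $G$ of length at most $d\cdot\dist_H(u,v)$, whence $\dist_G(u,v)\leq d\cdot\dist_H(u,v)$. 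These two inequalities fit the quasi-isometry definition with parameter $d$ (the additive slack and the surjectivity condition are immediate since the map is the identity on the same vertex set).

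I do not expect a serious obstacle here; the construction is clean and the only mildly delicate point is the lower distance bound $\dist_G(u,v)\leq d\cdot\dist_H(u,v)$, where one must argue that every $H$-edge is ``cheap'' in $G$. This relies precisely on the fact that the added edges all lie within a weak-diameter-$d$ part, so the bound $d$ on each substituted path segment is exactly the weak-diameter guarantee supplied by the $(k,d)$-centred hypothesis. The bookkeeping for the quasi-isometry inequalities is routine once this observation is in place, and the argument is insensitive to whether $T$ is a tree or a line, so the same proof will serve the pathwidth and linewidth analogues.
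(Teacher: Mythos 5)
Your proposal is correct and follows essentially the same route as the paper: augment $G$ by edges inside the bags so that each bag becomes a union of at most $k$ cliques, keep the decomposition $(T,\beta)$ unchanged, and verify that the identity map is a $d$-quasi-isometry via the path-substitution argument. The only (immaterial) difference is that the paper adds an edge between \emph{every} pair of vertices that share a bag and are at $G$-distance at most $d$, whereas you add edges only inside the parts $S_i^t$; both constructions satisfy the same two key properties and yield the same bounds.
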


\begin{proof}
    We may assume that $G$ is connected. Let $(T,\beta)$ be a $(k,d)$-centred $T$-decomposition of $G$. Construct $H$ from $G$ by adding the edge $uv$ to $G$ whenever $u,v\in \beta(t)$ for some $t\in V(T)$ and $\dist_G(u,v)\leq d$. Then $(T,\beta)$ is also a $T$-decomposition of $H$. Moreover, since each bag in the $T$-decomposition of $H$ is the union of at most $k$ cliques, it follows that the independence number of this $T$-decomposition of $H$ is at most $k$.

    We now show that the identity map $\phi\colon V(G)\to V(H)$, defined by $\phi(v)=v$ for all $v\in V(G)$, is a $d$-quasi-isometry from $G$ to $H$. Let $u,v\in V(H)$. Then $\dist_{H}(u,v)\leq \dist_G(u,v)$ since every path in $G$ is a path in $H$. Let $(u=w_0,w_1,w_2,\dots,w_{n-1},w_n=v)$ be a $(u,v)$-path in $H$ of length $n=\dist_{H}(u,v)$. For each $i\in \{1,\dots,n\}$, $G$ contains a $(w_{i-1},w_{i})$-path $P_i$ of length at most $d$. Concatenating the paths $P_1,\dots,P_n$ gives a $(u,v)$-walk in $G$ of length at most $dn$. Thus $\dist_G(u,v)\leq d\cdot \dist_{H}(u,v)$, as required.
\end{proof}

 Let $G$ be a graph. A \defn{partition} of $G$ is a collection $\PP$ of non-empty sets of vertices in $G$ such that each vertex of $G$ is in exactly one element of $\PP$ and each element of $\PP$ induces a connected subgraph of $G$. An element of $\PP$ is called a \defn{part}. The \defn{quotient} of $\PP$ (with respect to $G$) is the graph \defn{$G/\PP$} whose vertices are parts of $\PP$, where distinct parts $A,B\in \PP$ are adjacent in $G/\PP$ if and only if some vertex in $A$ is adjacent in $G$ to some vertex in $B$. Note that $G/\PP$ is isomorphic to the graph obtained from $G$ by contracting each part of $\PP$ into a single vertex.
 
 The following observation connects partitions to quasi-isometry.

\begin{obs}\label{QuasiIsometryObservation}
    For every $d\in \NN$, for every graph $G$, for every partition $\PP$ of $G$ where each part has weak diameter less than $d$, $G$ is $d$-quasi-isometric to $G/\PP$.
\end{obs}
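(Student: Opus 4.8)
The plan is to exhibit the identity map on vertices as the required quasi-isometry, mirroring the structure of \cref{DecompToInd}. Set $\phi\colon V(G)\to V(G/\PP)$ to send each vertex $v$ to the unique part $P_v\in\PP$ containing it. Since every part is non-empty and every part is hit by some vertex, the surjectivity-up-to-$d$ condition in the definition of quasi-isometry holds trivially (every vertex of $G/\PP$ equals $\phi(v)$ for some $v$, so the constant $c$ in the covering condition can be taken to be $0\leq d$). It therefore remains to verify the two-sided distance bound with constant $d$.

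For the upper bound, I would take a shortest $(u,v)$-path $(u=w_0,w_1,\dots,w_n=v)$ in $G$ and track which part each $w_i$ lies in. Consecutive vertices $w_{i-1}w_i$ are either in the same part or in adjacent parts, so the sequence of parts $P_{w_0},P_{w_1},\dots,P_{w_n}$, after deleting repetitions, gives a walk in $G/\PP$ from $P_u$ to $P_v$ of length at most $n=\dist_G(u,v)$. Hence $\dist_{G/\PP}(P_u,P_v)\leq \dist_G(u,v)\leq d\cdot\dist_G(u,v)+d$, which comfortably gives the required right-hand inequality.

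For the lower bound, suppose $\dist_{G/\PP}(P_u,P_v)=m$ and fix a shortest path $(P_u=A_0,A_1,\dots,A_m=P_v)$ in $G/\PP$. For each adjacency $A_{j-1}A_j$ choose witnessing vertices $x_j\in A_{j-1}$ and $y_j\in A_j$ with $x_jy_j\in E(G)$. Because each part has weak diameter less than $d$, I can connect the ``exit'' vertex $y_j$ of part $A_j$ to the ``entry'' vertex $x_{j+1}$ of the same part by a $G$-path of length at most $d-1$, and similarly bridge $u$ to $x_1$ and $y_m$ to $v$ inside their parts. Concatenating these intra-part paths with the $m$ crossing edges produces a $(u,v)$-walk in $G$ of length at most $dm+d$ (being slightly wasteful with the endpoint terms), so $\dist_G(u,v)\leq d\cdot\dist_{G/\PP}(P_u,P_v)+d$, which rearranges to $d^{-1}\dist_G(u,v)-1\leq \dist_{G/\PP}(P_u,P_v)$ and hence to the required left-hand inequality $d^{-1}\dist_G(u,v)-d\leq\dist_{G/\PP}(\phi(u),\phi(v))$.

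The only genuinely delicate point is the bookkeeping in the lower bound: I must ensure that the additive overhead from walking inside parts to align the crossing edges stays linear in $m$ with the correct multiplicative constant $d$, rather than accidentally accumulating an extra factor. Everything else is routine, since both inequalities follow from elementary walk-splicing arguments and the weak-diameter hypothesis is exactly what bounds the cost of moving within a single part.
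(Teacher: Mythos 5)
Your proof is correct, and it fills in exactly the argument the paper leaves implicit: the paper states this as an observation without proof, and the intended justification is precisely your walk-splicing argument (project a shortest $G$-path to a quotient walk for the upper bound; splice crossing edges with intra-part paths of length at most $d-1$ for the lower bound). Your bookkeeping checks out, since the spliced walk has length at most $(m+1)(d-1)+m = dm+d-1 \leq dm+d$, which yields the required inequalities.
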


The next result is due to \citet[Proposition~3.9]{DALLARD2024indep}.\footnote{Note that their proposition is stated in terms of tree-decompositions and induced minors. It is easy to see that the proof of their proposition implies this version of it.}

\begin{prop}[\cite{DALLARD2024indep}]\label{IndependentMinor}
    For every $k\in \NN$ and every tree $T$, for every graph $G$ that has a $T$-decomposition with independence number at most $k$, for every partition $\PP$ of $G$, the graph $G/\PP$ has a $T$-decomposition with independence number at most $k$. 
\end{prop}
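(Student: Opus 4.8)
The plan is to build the desired decomposition of $G/\PP$ on the \emph{same} tree $T$, simply by recording which parts meet each original bag. Let $(T,\beta)$ be a $T$-decomposition of $G$ with independence number at most $k$. For a part $A\in\PP$, set $T_A=\{t\in V(T)\colon \beta(t)\cap A\neq\emptyset\}$, and define $\gamma\colon V(T)\to 2^{V(G/\PP)}$ by $\gamma(t)=\{A\in\PP\colon t\in T_A\}$; that is, a part is placed in the new bag at $t$ exactly when it meets the old bag at $t$. I claim that $(T,\gamma)$ is a $T$-decomposition of $G/\PP$ whose independence number is at most $k$.

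First I would verify the two decomposition axioms. Since $T_A=\{t\colon A\in\gamma(t)\}$, the vertex axiom amounts to showing each $T_A$ induces a non-empty connected subtree of $T$. Non-emptiness is immediate, as $A\neq\emptyset$ and every vertex of $G$ lies in some bag. For connectivity I would invoke the standard fact that in a tree-decomposition the nodes whose bags meet a connected vertex set trace out a subtree: $T_A=\bigcup_{v\in A}\{t\colon v\in\beta(t)\}$ is a union of subtrees, and because $G[A]$ is connected (here is the only place the hypothesis that the parts of $\PP$ are connected is used), any two of these subtrees are linked through the bags containing the edges of a path in $G[A]$. For the edge axiom, suppose $A$ and $B$ are adjacent in $G/\PP$; then some $u\in A$ is adjacent in $G$ to some $v\in B$, so a bag $\beta(t)$ contains both $u$ and $v$, whence $\beta(t)$ meets both parts and $A,B\in\gamma(t)$.

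The heart of the argument is bounding the independence number of $(T,\gamma)$, and although it is short, this pigeonhole step is where the real content lies. Fix $t\in V(T)$ and suppose towards a contradiction that $\gamma(t)$ contains an independent set $\{A_1,\dots,A_{k+1}\}$ of $G/\PP$. Each $A_i$ meets $\beta(t)$, so choose $v_i\in A_i\cap\beta(t)$; since the parts are pairwise disjoint and the $A_i$ are distinct, the vertices $v_1,\dots,v_{k+1}$ are distinct and all lie in $\beta(t)$. As $\alpha(G[\beta(t)])\leq k$, two of them, say $v_i$ and $v_j$, are adjacent in $G$. But $v_i\in A_i$ and $v_j\in A_j$ with $A_i\neq A_j$, so this edge witnesses that $A_i$ and $A_j$ are adjacent in $G/\PP$, contradicting the independence of $\{A_1,\dots,A_{k+1}\}$. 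Hence $\alpha\bigl((G/\PP)[\gamma(t)]\bigr)\leq k$ for every $t$, which completes the proof. I do not anticipate a serious obstacle: the edge axiom and the independence bound hold for an arbitrary partition into vertex sets, and the one genuinely topological point—that each $T_A$ is a connected subtree—is exactly what the connectedness of the parts buys us.
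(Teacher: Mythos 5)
Your proof is correct, and it is essentially the argument behind the cited result: the paper itself gives no proof but defers to \citet[Proposition~3.9]{DALLARD2024indep}, whose proof projects the bags onto the quotient exactly as you do ($\gamma(t)=\{A\in\PP\colon \beta(t)\cap A\neq\emptyset\}$), using connectivity of the parts for the subtree axiom and the representative-vertex pigeonhole argument for the independence bound. Nothing is missing; in particular you correctly isolate the only place where connectedness of the parts is needed.
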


The next lemma is due to \citet[Lemma 10]{DN2022Asymdim}. 

\begin{lem}[\cite{DN2022Asymdim}]\label{WeakDiamColouring}
    There is a function $f\colon \NN \to \NN$ such that, for every $k\in \NN$, every graph $G$ that has a tree-decomposition with domination number at most $k$ has a partition $\PP$ of $G$ such that each part has weak diameter less than $f(k)$ and $G/\PP$ is bipartite.
\end{lem}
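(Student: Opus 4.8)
The plan is to produce the partition from a two-colouring of $V(G)$ and then take its monochromatic connected components as the parts. Concretely, it suffices to construct a colouring $c\colon V(G)\to\{0,1\}$ such that every connected component of $G[c^{-1}(0)]$ and of $G[c^{-1}(1)]$ has weak diameter less than $f(k)$. Given such a $c$, let $\PP$ be the set of all connected components of $G[c^{-1}(0)]$ together with all connected components of $G[c^{-1}(1)]$. Each part of $\PP$ is connected and has weak diameter less than $f(k)$ by assumption. Colouring each part by the common colour of its vertices is then a proper $2$-colouring of $G/\PP$: two distinct parts of the same colour are distinct components of the same colour class, so no edge of $G$ joins them and they are non-adjacent in $G/\PP$. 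Hence $G/\PP$ is bipartite, and it remains only to build $c$.

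For the colouring I would use a breadth-first layering. We may assume $G$ is connected, since distinct components never interfere. Fix a root $r\in V(G)$, let $V_i=\{v\colon \dist_G(r,v)=i\}$ be the BFS layers, fix a block length $m=m(k)$ to be chosen, and set $c(v)=\lfloor \dist_G(r,v)/m\rfloor \bmod 2$. Since each colour class is a union of bands $V_{[jm,(j+1)m)}$ and consecutive same-colour bands are separated by a full band of the other colour, every monochromatic connected component is exactly a connected component of a single band $G[V_{[a,a+m)}]$. Thus the whole statement reduces to one claim: each connected component of a band has weak diameter bounded by a function of $k$ and $m$, after which we absorb $m=m(k)$ into $f$.

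Bounding the weak diameter of band components is the crux, and it is the only place the hypothesis is used. The mechanism is clearest in the special case where $G$ is a tree: there, two distinct vertices at the same BFS level cannot be joined by a path staying at or above that level, so each band component descends from a single vertex of its lowest layer $V_a$ and therefore has weak diameter at most $2m$. For a general graph with a tree-decomposition of domination number at most $k$, a band component may meet its lowest layer in several \emph{entry points}, and the plan is to show these are few and pairwise close, within distance $D=D(k,m)$. Here one uses that a bag with a dominating set of size at most $k$ is a union of at most $k$ balls of radius $1$ in $G$, so the tree-decomposition is $(k,2)$-centred; a band component whose entry points spread far apart at essentially constant BFS-distance from $r$ would force a separator incompatible with this centred structure. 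Granting that the entry points are pairwise within distance $D$, any two vertices $u,v$ of the component are joined by descending along BFS-shortest paths to entry points (cost at most $m$ each) and crossing between nearby entry points (cost at most $D$), giving weak diameter at most $2m+D$.

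The main obstacle is therefore exactly this geometric claim: that a band component's contact with its lowest BFS layer is confined to a region of bounded diameter. This is where one must genuinely combine the BFS layering with the tree-decomposition and exploit the bounded domination number to rule out wide ``entry regions''. Once it is established, one selects $m=m(k)$ and sets $f(k)=2m+D(k,m)+1$, which completes the construction of $c$ and hence of the desired partition $\PP$.
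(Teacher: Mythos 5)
Your first step --- reducing the lemma to finding a $2$-colouring of $V(G)$ whose monochromatic components have weak diameter less than $f(k)$, and taking $\PP$ to be the monochromatic components --- is correct, and it is the right target (note the paper does not prove this lemma itself; it quotes it from Distel and Norin, so your attempt has to be judged on its own). The fatal problem is the claim you explicitly ``grant'': that every connected component of a BFS band $G[V_{[a,a+m)}]$ has weak diameter bounded by a function of $k$ and $m$. This claim is \emph{false}, so no constant $D(k,m)$ exists and the BFS-layering colouring cannot work for any choice of block length $m$. Counterexample (a fan with subdivided spokes): fix $L\in\NN$, let $n=3L$, take a hub $r$, a ``rim'' path $v_0v_1\cdots v_n$, and for each $i$ join $r$ to $v_i$ by a private path of length $L$ with internal vertices $s_{i,1},\dots,s_{i,L-1}$. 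This graph has pathwidth at most $4$: order the bags $B_{i,j}=\{r,\,v_i,\,v_{i+1},\,s_{i,j},\,s_{i,j+1}\}$ lexicographically in $(i,j)$. In particular it has a tree-decomposition with domination number at most $5$, uniformly in $L$, so it satisfies the hypothesis of the lemma with $k=5$. Now BFS from $r$ places the \emph{entire rim} in the single layer $V_L$ (each $v_i$ is at distance exactly $L$ from $r$ via its own spoke, and no route is shorter), and the rim is connected inside that layer. Hence, under your colouring $c(v)=\lfloor\dist_G(r,v)/m\rfloor \bmod 2$, the whole rim lies in one monochromatic component, whose weak diameter is at least $\dist_G(v_0,v_n)=\min(n,2L)=2L$, which is unbounded as $L\to\infty$. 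Choosing a different root does not help: BFS from a rim vertex likewise traps an arbitrarily long connected stretch of the rim in a single layer.

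So this is not merely an unproven step but a step that cannot be established: the interaction you hope for between BFS layers from a single root and the tree-decomposition does not exist, and the ``entry points spread out would force a bad separator'' intuition breaks down exactly because a bag dominated by $k$ vertices (here, bags containing $r$) can touch vertices that are pairwise far apart in $G$. Your tree case is genuinely correct, but it is precisely the case where graph distance and the tree structure coincide, which is what makes it misleading; the subdivided fan above is essentially the standard example in coarse graph theory of a bounded-treewidth graph that is not quasi-isometric to any tree, and any correct proof of the lemma must build its layering or partition along the tree-decomposition itself (as Distel and Norin do, processing a rooted tree-decomposition and using that each bag is covered by at most $k$ balls of radius $1$ around its dominating vertices) rather than from distances to a fixed root of $G$.
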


We now show that graphs with bounded tree independence number are quasi-isometric to graphs with bounded treewidth.

\begin{lem}\label{IndToTW}
    There is a function $f\colon \NN \to \NN$ such that, for every $k\in \NN$ and tree $T$, every graph $G$ that has a $T$-decomposition with independence number at most $k$ is $f(k)$-quasi-isometric to a graph $H$ that has a $T$-decomposition with width less than $2k-1$. 
\end{lem}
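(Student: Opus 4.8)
The plan is to run the partition-and-contract strategy outlined just above, using \cref{WeakDiamColouring,QuasiIsometryObservation,IndependentMinor} essentially off the shelf; the only genuine work is to read the width bound off of bipartiteness at the very end. Throughout, let $f$ denote the function supplied by \cref{WeakDiamColouring}, and let $(T,\beta_0)$ be the given $T$-decomposition of $G$ with independence number at most $k$.

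First I would convert the independence-number hypothesis into a domination-number hypothesis. Since $\gamma(G')\leq\alpha(G')$ for every graph $G'$, the decomposition $(T,\beta_0)$ has domination number at most $k$ as well. This is precisely the hypothesis required by \cref{WeakDiamColouring}, which therefore yields a partition $\PP$ of $G$ into connected parts, each of weak diameter less than $f(k)$, such that the quotient $G/\PP$ is bipartite. I set $H:=G/\PP$.

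Next I would verify the two conclusions for $H$. Because every part of $\PP$ has weak diameter less than $f(k)$, \cref{QuasiIsometryObservation} gives that $G$ is $f(k)$-quasi-isometric to $H$, which is exactly the claimed quasi-isometry. Because tree independence number is preserved under taking quotients, \cref{IndependentMinor} gives that $H$ has a $T$-decomposition $(T,\beta)$ whose independence number is still at most $k$. It is essential that the contraction be performed \emph{before} reasoning about the width: contracting parts of $G$ could blow up the ordinary width of the decomposition, but it leaves the independence number of the decomposition unchanged, and that is the quantity I exploit.

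Finally, I would use bipartiteness. Let $(A,B)$ be a bipartition of $H$. For each node $t\in V(T)$, the set $\beta(t)\cap A$ is independent in $H$, so $\abs{\beta(t)\cap A}\leq\alpha(H[\beta(t)])\leq k$, and symmetrically $\abs{\beta(t)\cap B}\leq k$; hence $\abs{\beta(t)}\leq 2k$ and the width of $(T,\beta)$ is at most $2k-1$. There is no real obstacle once the ingredients are assembled: all of the conceptual content lives in \cref{WeakDiamColouring} (making the quotient bipartite while keeping parts of bounded weak diameter) and in \cref{IndependentMinor} (stability of the tree independence number under quotients). The only things to get right are the order of operations—contract first, then read off the width—and the bookkeeping that the quasi-isometry constant is exactly the $f(k)$ produced by \cref{WeakDiamColouring}.
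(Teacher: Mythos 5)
Your proposal is correct and takes essentially the same route as the paper's own proof: pass from independence number to domination number via $\gamma\leq\alpha$, apply \cref{WeakDiamColouring} to get the bipartite quotient $H=G/\PP$, invoke \cref{QuasiIsometryObservation} for the $f(k)$-quasi-isometry and \cref{IndependentMinor} for the preserved independence number, and read off $\abs{\beta(t)}\leq 2k$ from bipartiteness. Note that both your argument and the paper's establish width at most $2k-1$ (i.e.\ less than $2k$), so the statement's phrase ``width less than $2k-1$'' is evidently a typo rather than a gap in your reasoning.
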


\begin{proof}
    Let $f\colon \NN \to \NN$ be given by \cref{WeakDiamColouring}. Let $(T,\beta)$ be a $T$-decomposition of $G$ with independence number at most $k$. Then $(T,\beta)$ has domination number at most $k$. By \cref{WeakDiamColouring}, $G$ has a partition $\PP$ where each part has weak diameter at most $f(k)$ and $G/\PP$ is bipartite. Let $H=G/\PP$. By \cref{QuasiIsometryObservation}, $G$ is $f(k)$-quasi-isometric to $H$. By \cref{IndependentMinor}, $H$ has a $T$-decomposition $(T,\beta')$ with independence number at most $k$. Since $H$ is bipartite, it follows that $|\beta'(t)|\leq 2k$ for every node $t\in V(T)$.
\end{proof}

The next observation is folklore. 

\begin{obs}\label{QuasiQuasi}
    For all $c,q\in \NN$, if a graph $G$ is a $c$-quasi-isometric to a graph $G_1$ and if $G_1$ is $q$-quasi-isometric to a graph $G_2$, then $G$ is $q(c+2)$-quasi-isometric to $G_2$.
\end{obs}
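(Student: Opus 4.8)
The plan is to show that the composition $\rho := \psi \circ \phi$ witnesses the conclusion, where $\phi\colon V(G)\to V(G_1)$ is a $c$-quasi-isometry and $\psi\colon V(G_1)\to V(G_2)$ is a $q$-quasi-isometry. All three defining properties of a $q(c+2)$-quasi-isometry then follow by direct substitution of one set of inequalities into the other; since $q,c\geq 1$, the only work is to check that the accumulated constants stay below the required threshold $q(c+2)$.

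First I would verify the upper distance bound. For $u,v\in V(G)$, applying the upper bound for $\psi$ and then the upper bound for $\phi$ gives
$$\dist_{G_2}(\rho(u),\rho(v))\leq q\,\dist_{G_1}(\phi(u),\phi(v))+q\leq q\bigl(c\,\dist_G(u,v)+c\bigr)+q = qc\,\dist_G(u,v)+qc+q,$$
which is at most $q(c+2)\,\dist_G(u,v)+q(c+2)$ because $qc\leq q(c+2)$ and $qc+q\leq q(c+2)$. Next I would check the lower bound: applying the lower bound for $\psi$ and then for $\phi$,
$$\dist_{G_2}(\rho(u),\rho(v))\geq q^{-1}\dist_{G_1}(\phi(u),\phi(v))-q\geq q^{-1}\bigl(c^{-1}\dist_G(u,v)-c\bigr)-q = (qc)^{-1}\dist_G(u,v)-q^{-1}c-q.$$
Here $(qc)^{-1}\geq\bigl(q(c+2)\bigr)^{-1}$, and since $q\geq 1$ we have $q^{-1}c+q\leq qc+2q=q(c+2)$, so the right-hand side is at least $\bigl(q(c+2)\bigr)^{-1}\dist_G(u,v)-q(c+2)$, as required.

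Finally I would establish coarse surjectivity. Given $y\in V(G_2)$, coarse surjectivity of $\psi$ yields $a\in V(G_1)$ with $\dist_{G_2}(y,\psi(a))\leq q$, and coarse surjectivity of $\phi$ yields $v\in V(G)$ with $\dist_{G_1}(a,\phi(v))\leq c$. Combining the triangle inequality with the upper bound for $\psi$ then gives
$$\dist_{G_2}(y,\rho(v))\leq \dist_{G_2}(y,\psi(a))+\dist_{G_2}(\psi(a),\psi(\phi(v)))\leq q+\bigl(qc+q\bigr)=q(c+2).$$
As this is entirely a matter of bookkeeping the constants through two successive substitutions, there is no substantial obstacle; the only point requiring care is tracking that every error term produced by composing the two families of inequalities is dominated by the claimed parameter $q(c+2)$, which the above computations confirm.
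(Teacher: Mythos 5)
Your proof is correct: the paper states this observation as folklore without proof, and your argument (composing the two maps and checking the three defining properties with the stated constant) is exactly the standard argument being implicitly invoked. All the constant bookkeeping checks out, including the coarse surjectivity bound $q + qc + q = q(c+2)$.
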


We now prove our main lemma.
\begin{lem}\label{MainLemma}
    There is a function $q\colon \NN^2 \to \NN$ such that, for all $d,k\in \NN$ and every tree $T$, every graph $G$ that has a $(k,d)$-centred $T$-decomposition is $q(k,d)$-quasi-isometric to a graph that has a $T$-decomposition with width at most $2k-1$. 
\end{lem}

\begin{proof}
    Let $f\colon \NN \to \NN$ be the function given by \cref{WeakDiamColouring}. By \cref{DecompToInd}, $G$ is $d$-quasi-isometric to a graph $G_1$ that has a $T$-decomposition with independence number $k$. By \cref{IndToTW}, $G_1$ is $f(k)$-quasi-isometric to a graph $G_2$ that has a $T$-decomposition with width at most $2k-1$. The claim then follows from \cref{QuasiQuasi} by setting $q(k,d)=(d+2) f(k)$.
\end{proof}

\cref{CharacterisationThmTW,CharacterisationThmPW} immediately follows from \cref{Quasi2kd,MainLemma}.

\section{Application}

In this section, we use \cref{CharacterisationThmTW} to show that several graph classes are quasi-isometric to graphs with bounded treewidth. To unify our approach, we only show that graphs with bounded sim-width are quasi-isometric to graphs with bounded treewidth. We choose the parameter sim-width because it encompasses many graph classes. Specifically, \citet{Vatshelle2012} proved that graphs with bounded rank-width have bounded mim-width; \citet{KANG2017} showed that graphs with bounded mim-width have bounded sim-width; and \citet{MUNARO2023} showed that graphs with bounded tree independence number have bounded sim-width. Consequently, proving \cref{ClassesQuasi} reduces to the following.

\begin{thm}\label{SimWidthQuasi}
    There is a function $q\colon \NN \to \NN$ such that, for every $k\in \NN$, every graph $G$ with sim-width at most $k$ is $q(k)$-quasi-isometric to a graph with treewidth at most $12k-1$.
\end{thm}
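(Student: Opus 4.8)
The goal is to show that bounded sim-width implies quasi-isometry to bounded treewidth. Given the machinery already established—particularly $\cref{CharacterisationThmTW}$ and its building blocks—the cleanest route is to produce a $(k',d)$-centred tree-decomposition directly, and then invoke $\cref{CharacterisationThmTW}$ (or $\cref{MainLemma}$) to conclude. So the plan is to translate the combinatorial definition of sim-width into a centred tree-decomposition.

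Let me recall what sim-width measures. A graph $G$ has sim-width at most $k$ if it admits a branch-decomposition $(T,\delta)$ (a tree $T$ with leaves bijective to $V(G)$) such that every edge of $T$ induces a bipartition $(A,B)$ of $V(G)$ whose \emph{special induced matching number} is at most $k$: the maximum size of an induced matching $\{a_1b_1,\dots,a_mb_m\}$ with each $a_i\in A$, $b_i\in B$, such that $\{a_1,\dots,a_m\}$ and $\{b_1,\dots,b_m\}$ are each independent and there are no edges between $\{a_i\}$ and $\{b_j\}$ for $i\neq j$. The key structural fact I want to exploit is the known relationship between sim-width and tree-independence-number-type parameters. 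In particular, I would use the result that a branch-decomposition witnessing sim-width $k$ can be converted into a tree-decomposition in which every bag is \emph{dominated} by a bounded number of vertices, or more precisely has bounded \emph{domination number after bounded-radius expansion}. The subtlety is that sim-width bags need not have bounded independence number (that is precisely the difference between sim-width and tree-independence number), so I cannot directly invoke $\cref{IndToTW}$.

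The heart of the argument, and the step I expect to be the main obstacle, is producing from a width-$k$ sim-decomposition a tree-decomposition of $G$ whose bags are each $(O(k),d)$-centred for some absolute constant $d$. The natural approach: convert the branch-decomposition into a tree-decomposition $(T',\beta)$ in the standard way (each separation $(A,B)$ yields a separator, and I take bags to be appropriate separators). For sim-width, the relevant separators are not small, but they should be coverable by a bounded number of balls: the key lemma I would aim to prove is that in a graph of sim-width $\leq k$, for any "sim-small" separation, one side's neighbourhood interface can be dominated by $O(k)$ vertices, so that the separator lies within bounded distance (in $G$) of $O(k)$ vertices. Concretely, I would try to show that each bag is contained in the union of at most $6k$ balls of radius $O(1)$—matching the stated treewidth bound $12k-1 = 2(6k)-1$ from $\cref{MainLemma}$, which takes $(k',d)$-centred to treewidth $2k'-1$. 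The factor of $6$ should emerge from bounding, via the induced-matching constraint, how many "domination representatives" are needed to hit the separator while controlling diameter.

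Thus the overall skeleton is: first, establish the structural lemma that sim-width $\leq k$ yields a $(6k, d)$-centred tree-decomposition for some constant $d$ (this is where all the real work lies, using the induced-matching obstruction to bound the number of balls and a Ramsey/greedy argument to bound their radius); then apply $\cref{MainLemma}$ with parameters $(6k,d)$ to obtain a $q(6k,d)$-quasi-isometry to a graph of treewidth at most $2(6k)-1 = 12k-1$, and set $q(k)$ accordingly. I would isolate the structural lemma as its own statement before the proof of $\cref{SimWidthQuasi}$, since verifying that the number of balls is exactly $6k$ (rather than some larger multiple) is the delicate accounting that determines the final constant, and I expect the induced-matching argument bounding the ball count to be the genuinely hard and novel part.
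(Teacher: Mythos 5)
Your overall reduction is exactly the paper's: extract from the width-$k$ branch-decomposition a tree-decomposition whose bags are $(6k,O(1))$-centred, then apply \cref{MainLemma} (equivalently \cref{CharacterisationThmTW}) to reach treewidth $2(6k)-1=12k-1$; you even predict the factor $6$ and that the induced-matching constraint drives it, and you correctly observe that \cref{IndToTW} cannot be invoked directly since sim-width does not bound the independence number of bags. The problem is that the structural lemma carrying all of this --- which you explicitly defer as ``where all the real work lies'' --- is never proved. A proposal that reduces the theorem to an unproved lemma, supported only by a gesture at ``appropriate separators'' and ``a Ramsey/greedy argument'', has not proved the theorem; that lemma is the entire content of the paper's argument for \cref{SimWidthQuasi}.

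For comparison, here is how the paper fills the gap (\cref{LemmaSimWidth}); note that it is short and does not pass through separators at all. Given a branch-decomposition $(T,L)$ of width at most $k$, for each edge $uv\in E(G)$ place both $u$ and $v$ in every bag $\beta(t)$ with $t$ on the $(L(u),L(v))$-path of $T$. This is a tree-decomposition, since the trace of each vertex $v$ is a union of paths of $T$ sharing the end-point $L(v)$. At a non-leaf node $t$, the bag splits into three sets $A_1^t,A_2^t,A_3^t$ according to the three components of $T-t$, and for each $i$ a maximum induced matching between $A_i^t$ and the rest of the bag has size at most $k$, because such a matching is induced across the vertex bipartition associated with the edge of $T$ separating $T_i^t$ from the other two components. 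The paper then argues that, by maximality of this matching, every vertex of $A_i^t$ is adjacent to one of its at most $2k$ endpoints, so $G[\beta(t)]$ has domination number at most $6k$. Domination number at most $6k$ immediately makes the bag $(6k,3)$-centred --- each closed neighbourhood of a dominating vertex has weak diameter at most $2$ --- so no radius control, no Ramsey argument, and no greedy selection is needed; the difficulty you anticipated collapses to this single observation. In short: right skeleton, identical to the paper's, but the theorem remains unproved in your write-up until you supply this (or an equivalent) construction together with its counting argument.
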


Since we will not discuss rank-width and mim-width any further, we omit their definitions. See \cite{OS06} for the definition of rank-width and \cite{Vatshelle2012} for the definition of mim-width. 

We now define sim-width. For a graph $G$, let \defn{$\simval_G$}$\colon 2^{V(G)}\to \NN$ be the function such that, for $A\subseteq V(G)$, $\simval_G(A)$ is the maximum size of an induced matching $\{a_1b_1, a_2b_2,\dots, a_mb_m\}$ in $G$ where $a_1,\dots,a_m\in A$ and $b_1\dots,b_m\in V(G)\setminus A$. For a graph $G$, a pair $(T, L)$ of a subcubic tree $T$ and a function $L$ from $V(G)$ to the set of leaves of $T$ is a \defn{branch-decomposition}. For each edge $e$ of $T$, let $(A_1^e,A_2^e)$ be the vertex partition of $G$ associated with $e$, where $T_1^e$ and $T_2^e$ are the two components of $T-e$ and, for each $i\in \{1,2\}$, $A_i^e$ is the set of vertices in $G$ mapped to the leaves in $T_i^e$. For a branch-decomposition $(T, L)$ of a graph $G$ and an edge $e$ in $T$, the \defn{width} of $e$ is $\simval_G(A_e^1)$ where $(A_e^1,A_e^2)$ is the vertex partition associated with $e$. The \defn{width} of $(T, L)$ is the maximum width over all edges in $T$. The \defn{sim-width} of $G$ is the minimum width of a branch-decomposition of $G$.

\begin{lem}\label{LemmaSimWidth}
    For every $k\in \NN$, every graph $G$ with sim-width at most $k$ has a tree-decomposition with domination number at most $6k$.
\end{lem}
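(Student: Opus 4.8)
The plan is to extract from a minimum-width branch-decomposition a supply of \emph{balanced separators of small domination number}, and then feed these into the standard construction that turns such separators into a tree-decomposition. The heart of the argument is a single-cut lemma: for any vertex-partition $(A,B)$ of $G$ with $\simval_G(A)\le k$ there is a set $S$ that separates $A$ from $B$ (no edge of $G-S$ joins $A\setminus S$ to $B\setminus S$) and satisfies $\gamma(G[S])\le 2k$. The naive candidate for $S$---the set of all vertices incident to a crossing edge---does not work, since this ``full boundary'' can have arbitrarily large domination number even when $\simval_G(A)=1$: take a clique with a private pendant at each vertex and split it into the pendants versus the clique, so that the cut has a single crossing edge up to induced equivalence yet its boundary is all of $V(G)$ with domination number $\abs{V(G)}/2$.

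To prove the single-cut lemma I would take a \emph{maximal} (under inclusion) induced matching $\{a_1b_1,\dots,a_mb_m\}$ consisting of edges that cross the cut, so $a_i\in A$, $b_i\in B$, and $m\le \simval_G(A)\le k$. Let $D=\{a_1,b_1,\dots,a_m,b_m\}$ and let $S$ be the set of all vertices that are incident to a crossing edge and lie in $N_G[D]$. Maximality is exactly what makes this work: for any crossing edge $ab$, the collection $\{a_1b_1,\dots,a_mb_m,ab\}$ fails to be an induced matching, so either $ab$ meets $D$ or some edge of $G$ joins $\{a,b\}$ to $D$; in every case $a$ or $b$ lies in $N_G[D]$, hence in $S$. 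Thus $S$ is a vertex cover of the crossing edges, which gives the separation property. Moreover $D\subseteq S$ and every vertex of $S$ lies in $N_G[D]$, so $D$ is a dominating set of the induced subgraph $G[S]$ and $\gamma(G[S])\le\abs{D}=2m\le 2k$.

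With the single-cut lemma in hand I would produce balanced separators. Fix a branch-decomposition $(T,L)$ of $G$ of width at most $k$ and any weight function on $V(G)$ (equivalently, on the leaves of $T$). Let $t^{\ast}$ be a centroid node of $T$; since $T$ is subcubic, $t^{\ast}$ has at most three incident edges, each yielding a cut and hence, by the single-cut lemma, a separator $S_i$ with $\gamma(G[S_i])\le 2k$. Set $S=S_1\cup S_2\cup S_3$. Because $\gamma(G[X\cup Y])\le \gamma(G[X])+\gamma(G[Y])$, we get $\gamma(G[S])\le 6k$; and because $S$ covers every edge crossing any of the three cuts, each component of $G-S$ is confined to a single subtree hanging off $t^{\ast}$ and therefore carries at most half of the total weight. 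So every weighting admits a balanced separator $S$ with $\gamma(G[S])\le 6k$. Finally I would invoke the standard recursive construction that converts a supply of balanced separators of domination number at most $c$ into a tree-decomposition of domination number at most $c$ (the same machinery used for tree-independence number, cf.\ \cite{DALLARD2024indep}), yielding the desired tree-decomposition of domination number at most $6k$.

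I expect the genuine obstacle to be the single-cut lemma, and in particular the idea of taking a \emph{maximal} rather than a maximum induced matching, so that its closed neighbourhood covers every crossing edge: this is precisely what upgrades a bound on induced matchings into a set that is simultaneously a vertex cover of the cut and dominated by only $2k$ vertices, sidestepping the failure of the full-boundary approach. Turning the resulting balanced separators into a tree-decomposition is routine; the only bookkeeping is to confirm that the recursion keeps the domination number at $6k$, for which the subadditivity of $\gamma$ over unions of bags, together with the fact that each recursive part inherits the balanced-separator supply, is the relevant tool.
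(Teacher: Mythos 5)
Your route is genuinely different from the paper's, and your preliminary observation about the ``full boundary'' is, strikingly, a refutation of the paper's own construction. The paper builds its tree-decomposition on the branch-decomposition tree $T$ itself, taking as the bag at an internal node $t$ exactly the set you dismiss: all vertices with a neighbour mapped to a different component of $T-t$. It then claims that each side $A_1^t$ of such a bag is dominated by the vertex set of a \emph{maximum} induced matching crossing the corresponding cut. That claim fails for precisely the reason your single-cut lemma is designed around: if $v\in A_1^t$ is not adjacent to the matching but its crossing neighbour $w$ is adjacent to a matched vertex, then adding $vw$ does not create an induced matching, so maximality yields no contradiction. Your pendant-clique example makes this concrete: in the clique with a private pendant at each vertex, every cut has $\simval$ at most $1$ (any two crossing edges see a clique edge between them), so sim-width is $1$; yet for a caterpillar branch-decomposition listing all pendants $p_1,\dots,p_n$ first and then all clique vertices $c_1,\dots,c_n$, the paper's bag at the $j$-th spine node is $\{p_1,\dots,p_j\}\cup\{c_1,\dots,c_j\}$, whose domination number is $j$ --- unbounded. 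Your repair is correct: for a \emph{maximal} induced matching with vertex set $D$, the set $S$ of boundary vertices inside $N_G[D]$ is a vertex cover of the crossing edges (maximality forces every crossing edge to meet $N_G[D]$) and $D$ dominates $G[S]$, so $\gamma(G[S])\le 2k$; the centroid argument then gives balanced separators with $\gamma\le 6k$, and this supply is correctly made hereditary (which matters, since $\gamma$ is not monotone under induced subgraphs, so separators must be recomputed inside each part --- possible because induced subgraphs inherit sim-width at most $k$).

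The gap in your proposal is the final step. The standard recursion that converts balanced separators into a tree-decomposition does not keep bags equal to single separators: each bag is the current separator together with the accumulated ``active'' set, which is itself a union of up to two or three earlier separators. The subadditivity of $\gamma$ over unions --- the very tool you invoke --- then gives bags with domination number on the order of $3\cdot 6k=18k$, not $6k$. So as written you prove the lemma with a worse constant: domination number $O(k)$ rather than the stated $6k$. This weaker bound still suffices for \cref{SimWidthQuasi} and \cref{ClassesQuasi} (with a larger treewidth bound), but the assertion that the recursion ``keeps the domination number at $6k$'' is unjustified and, for the standard machinery, false; you need either to accept the larger constant or to design a construction whose bags are single three-cut separators. (A minor further point: the centroid of $T$ may be a leaf whose single vertex carries more than half the weight; this standard edge case needs a sentence.) Given that the paper's own argument for the $6k$ bound collapses on your example, the honest conclusion is that your approach proves the lemma up to the value of the constant, whereas the paper's proof as written does not prove it at all.
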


\begin{proof}
    We may assume that $G$ is connected. If $|V(G)|\leq 1$, then we are done by placing $V(G)$ into a single bag. So we may assume that every vertex in $G$ is incident to an edge. Let $(T,L)$ be a branch-decomposition of $G$ with width at most $k$. For each edge $uv\in E(G)$, for each node $t$ on the unique $(L(u),L(v))$-path in $T$, add $u$ and $v$ to the bag $\beta(t)$. We claim that $(T,\beta)$ is a $T$-decomposition of $G$ with domination number at most $6k$. 
    
    By construction, for each edge $uv\in E(G)$, we have $u,v\in \beta(L(u))$. Moreover, for each vertex $v\in V(G)$, the graph $T[\{t\in V(T)\colon v\in \beta(t)\}]$ is the union of paths in $T$ that each contain $L(v)$ as an end-point. So $T[\{t\in V(T)\colon v\in \beta(t)\}]$ is non-empty and connected. Therefore, $(T,\beta)$ is a tree-decomposition of $G$. 

    For each leaf $\ell$ in $T$, every vertex in $\beta(\ell)$ is adjacent in $G$ to $L^{-1}(\ell)$ or is itself $L^{-1}(\ell)$. So $\beta(\ell)$ has domination number $1$. Let $t\in V(T)$ be a non-leaf node. Let $(A_1^t,A_2^t,A_3^t)$ be the partition of $\beta(t)$ where $T_1^t,T_2^t,T_3^t$ are the components of $T-t$, and for each $i\in \{1,2,3\}$, $A_i^t$ is the set of vertices in $G$ mapped to a leaf in $T_i^t$ that has a neighbour mapped to a different component of $T-t$. Let $\{a_1 b_1,\dots,a_m b_m\}$ be a maximum induced matching in $G$ where $a_1,\dots a_m\in A_1^t$ and $b_1,\dots,b_m\in A_2^t\cup A_3^t$. Then $m\leq k$. Since every vertex in $A_1^t$ is adjacent to a vertex in $\{a_1,b_1,\dots,a_m,b_m\}$, it follows that $G[A_1^t]$ has domination number at most $2k$. By symmetry, $G[A_2^t]$ and $G[A_3^t]$ also have domination number at most $2k$. So $G[\beta(t)]$ has domination number at most $6k$, as required.
\end{proof}

\cref{SimWidthQuasi} follows from \cref{CharacterisationThmTW} and \cref{LemmaSimWidth} since if a graph induced by a set of vertices $X$ has domination number at most $k$, then the set $X$ is $(k,3)$-centred.

\section{Linewidth}\label{SectionLineWidth}

In this section, we characterise graphs that are quasi-isometric to graphs with bounded linewidth. Throughout this section, graphs may be infinite. Our proof is essentially the same as what we did for treewidth and pathwidth, except we replace \cref{WeakDiamColouring} by \cref{LineWidth2Colouring}. The \defn{line independence number} of a graph $G$ is the minimum $k\in \NN$ such that $G$ admits an $L$-decomposition $(L,\beta)$ for some line $L$ where each bag has independence number at most $k$; if no such decomposition exists, then it is infinite. 

\begin{lem}\label{LineWidth2Colouring}
    There exists a function $f \colon \NN \to \NN$ such that, for every $k\in \NN$, every graph $G$ with line independence number at most $k$ has a partition $\PP$ such that each part has weak diameter less than $f(k)$ and $G/\PP$ is bipartite.
\end{lem}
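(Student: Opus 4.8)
The plan is to prove \cref{LineWidth2Colouring} directly by a layering argument; for finite graphs its content is exactly what \cref{WeakDiamColouring} supplies, and the work here is to make the argument go through for lines and for infinite graphs. Since the domination number of any graph is at most its independence number, a graph $G$ with line independence number at most $k$ admits an $L$-decomposition $(L,\beta)$, for some line $L$, in which every bag $G[\beta(x)]$ has domination number at most $k$; this is the only feature of the decomposition I would use. I would first reduce to $G$ connected: distinct components can be partitioned independently, a disjoint union of partitions with bipartite quotients again has a bipartite quotient, and the weak-diameter bound is a per-part condition. This reduction is valid for infinite $G$, and each component may itself be infinite.

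For connected $G$, fix a root $r$ and let $V_i=\{v\in V(G):\dist_G(v,r)=i\}$ be the BFS layers (there may be infinitely many, each possibly infinite). The guiding idea is to build $\PP$ so that each part lies inside a bounded band of consecutive layers: since every edge of $G$ joins layers whose indices differ by at most $1$, a partition that respects the layering and alternates parts by the parity of a level refining the layer index yields a bipartite quotient almost for free, so the entire difficulty is to control weak diameter. The naive choice --- each part a connected component of a single layer --- does give a bipartite quotient, because two adjacent vertices of a layer lie in a common component and hence a common part, so that every inter-part edge runs between consecutive layers. But it fails the weak-diameter bound: a single layer can contain a connected component of unbounded weak diameter. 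For instance, take $r$, a path $x_0x_1\cdots x_m$, and for each $t$ an $(r,x_t)$-path of length $i$, all internally disjoint; then $x_0x_1\cdots x_m$ is a single connected BFS layer whose weak diameter grows without bound with $m$ and $i$, although this graph has a line-decomposition of bounded domination number.

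Consequently the crux, and the step I expect to be the main obstacle, is to cut each wide layer into pieces of bounded weak diameter \emph{without} destroying bipartiteness: a cut inside a layer produces inter-part edges within that layer, which are monochromatic for the layer-parity colouring and close odd cycles. Here I would use the bounded domination number to argue that within each band the cross-sectional structure is simple enough that the cuts can be \emph{staggered} across neighbouring strands and assigned a refined integer level, consistent modulo $2$, so that adjacent parts always differ in level parity; in the example above this amounts to exploiting that the internally disjoint paths are mutually non-adjacent away from $r$ and the outer path, leaving enough freedom to offset the cut points. Making this uniform, with one weak-diameter bound $f(k)$ depending only on $k$, is the technical heart of the proof. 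Finally, for arbitrary lines $L$ and infinite $G$, I would carry out the construction directly, since the layering and the staggering never reference finiteness of $V(G)$ or the order type of $L$ --- only that each vertex occupies an interval of $L$ and each bag is dominated by $k$ vertices --- or else reduce to the finite case by a compactness argument, noting that every finite induced subgraph inherits an $L$-decomposition of domination number at most $k$ and so is covered by the finite statement.
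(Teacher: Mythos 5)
Your proposal is not a proof: it is a plan whose decisive step you explicitly leave open. The entire difficulty of \cref{LineWidth2Colouring} lies in producing parts of weak diameter bounded by a \emph{uniform} $f(k)$ while keeping the quotient bipartite, and your layering argument arrives at exactly this point (``staggering'' the cuts across strands within a band) and then stops, conceding that making it uniform ``is the technical heart of the proof.'' Identifying the obstacle is not overcoming it. Note that what you are attempting to rebuild by hand is essentially \cref{WeakDiamColouring} (Lemma~10 of Distel and Norin), whose known proof is genuinely nontrivial even for finite graphs with tree-decompositions; a direct extension to arbitrary lines and infinite graphs would need all of those details spelled out, and your sketch supplies none of them. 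Your fallback --- ``or else reduce to the finite case by a compactness argument'' --- hides the same gap in a different place: the property ``$G$ has a $2$-colouring whose monochromatic components have weak diameter less than $c$'' does not pass from the finite induced subgraphs of $G$ to $G$ by a routine K\H{o}nig/Tychonoff argument, because a monochromatic path witnessing large weak diameter in $G$ need not live inside any prescribed finite induced subgraph, so finite satisfiability of the relevant closed constraints is not what the finite statement gives you. This lifting step is a real theorem, not an exercise.

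For comparison, the paper's proof sidesteps both of your obstacles entirely. It observes that every finite induced subgraph $G'$ of $G$ inherits an $L$-decomposition with bags of independence number at most $k$ (restrict each bag to $V(G')$), hence has tree independence number at most $k$; by \cref{ClassesAsymptotic} the class of finite induced subgraphs of $G$ therefore has asymptotic dimension at most $1$; the compactness theorem of Bonamy et al.\ (\cref{Compact}) --- which is precisely the nontrivial lifting result your fallback would need --- then gives that $G$ itself has asymptotic dimension at most $1$; and applying the definition of asymptotic dimension at scale $\delta=1$ yields a $2$-colouring of $V(G)$ whose monochromatic components have weak diameter less than some $c_k$, whose components form the desired partition $\PP$ with $G/\PP$ bipartite. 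No layering, no staggered cuts, and no new compactness argument are required. If you want to salvage your approach, the honest options are either to carry out the staggering construction in full (in effect reproving and generalising \cref{WeakDiamColouring}), or to replace your vague compactness step by an appeal to a result of the strength of \cref{Compact}; as written, the proposal proves nothing beyond the finite case it already attributes to \cref{WeakDiamColouring}.
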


To prove \cref{LineWidth2Colouring}, we require the following compactness result of \citet[Theorem~A.2]{bonamy2023asymptotic}.

\begin{lem}[\cite{bonamy2023asymptotic}]\label{Compact}
    Let $G$ be a graph. If the class of its finite induced subgraphs has asymptotic dimension at most $n$, then $G$ itself has asymptotic dimension at most $n$. 
\end{lem}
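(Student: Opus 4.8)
The plan is to recast the existence of a good partition as a purely local colouring condition and then apply propositional compactness. Fix $\delta \ge 0$ and set $D = f(\delta)$, where $f$ is the common $n$-dimensional control function for all finite induced subgraphs of $G$. First I would establish a reformulation valid for any graph $X$: there is an $(n+1,\delta)$-disjoint partition of $V(X)$ into sets of weak diameter at most $D$ in $X$ if and only if there is a colouring $c\colon V(X)\to [n+1]$ such that every monochromatic \emph{$\delta$-component} has weak diameter at most $D$ in $X$, where a $\delta$-component of a colour class is a maximal set in which consecutive vertices can be joined by jumps of length at most $\delta$. For the forward direction, if $u,v$ receive the same colour and $\dist_X(u,v)\le\delta$, then they must lie in a common part (otherwise their two parts would be at distance at most $\delta$, contradicting $\delta$-disjointness within that colour class); hence each $\delta$-component sits inside a single part and so has weak diameter at most $D$. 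For the converse, the $\delta$-components of the colour classes themselves form a partition into sets of weak diameter at most $D$, and grouping them by colour gives $n+1$ subcollections that are each $\delta$-disjoint, since distinct $\delta$-components of the same colour lie at distance greater than $\delta$.

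The crucial feature of this colouring condition is that it can only be \emph{violated} by a finite witness: a monochromatic sequence $(v_0,\dots,v_m)$ with $\dist_G(v_j,v_{j+1})\le\delta$ for all $j$ and $\dist_G(v_0,v_m)>D$. I would therefore encode the search for a global colouring of $G$ as a propositional satisfiability problem, with variables $x_{v,i}$ (for $v\in V(G)$ and $i\in[n+1]$) asserting that $v$ has colour $i$, together with clauses forcing each vertex to take exactly one colour and forbidding every such monochromatic witness. By the compactness theorem for propositional logic, the whole clause set is satisfiable as soon as every finite subset is; and any finite subset mentions only a finite set $S\subseteq V(G)$, so it suffices to produce, for each finite $S$, a colouring of $S$ satisfying all $G$-constraints supported on $S$.

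The main obstacle is that distances in an induced subgraph can only \emph{increase}, so a certificate for a finite subgraph does not automatically certify the required $G$-distance constraints. I would overcome this by choosing the finite subgraph carefully. Given finite $S$, let $S^+$ consist of $S$ together with the vertices of one $G$-geodesic for each pair in $S$ at distance at most $\delta$, and put $H=G[S^+]$, which is finite even when $G$ has infinite degrees. Applying the reformulation to $H$ and invoking the hypothesis yields a colouring $c_H$ of $H$ all of whose monochromatic $\delta$-components have $H$-weak-diameter at most $D$. I claim $c_H$ restricted to $S$ satisfies the $G$-constraints on $S$: any forbidden monochromatic $G$-$\delta$-path $(v_0,\dots,v_m)$ inside $S$ would have $\dist_H(v_j,v_{j+1})=\dist_G(v_j,v_{j+1})\le\delta$ (the chosen geodesics lie in $H$) and $\dist_H(v_0,v_m)\ge\dist_G(v_0,v_m)>D$, exhibiting a monochromatic $\delta$-component of $H$-weak-diameter greater than $D$, a contradiction. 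Here the monotonicity $\dist_H\ge\dist_G$ is exactly what makes the endpoint inequality transfer in the favourable direction, while the inserted geodesics preserve the step inequalities.

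Compactness then supplies a global colouring of $G$, and the reformulation converts it into an $(n+1,\delta)$-disjoint partition of $V(G)$ into parts of weak diameter at most $D=f(\delta)$ in $G$. Since $\delta\ge 0$ was arbitrary and the same $f$ is used throughout, $f$ is an $n$-dimensional control function for $G$, so $G$ has asymptotic dimension at most $n$, as required. I expect the reformulation and the choice of $H$ (with its geodesics) to be where all the real content lies; the compactness invocation itself is then routine.
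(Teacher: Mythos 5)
Your proof is correct. A point of context first: the paper does not prove this lemma at all --- it is imported verbatim from Bonamy, Bousquet, Esperet, Groenland, Liu, Pirot and Scott (their Theorem~A.2), so there is no in-paper argument to compare against; your proposal is a self-contained proof in the same spirit as the cited one, which is likewise a compactness argument. Checking your steps: the reformulation of an $(n+1,\delta)$-disjoint partition into parts of weak diameter at most $D$ as an $(n+1)$-colouring whose monochromatic $\delta$-components have weak diameter at most $D$ is correct in both directions (in the forward direction, two same-coloured vertices at distance at most $\delta$ must share a part by $\delta$-disjointness, so each $\delta$-component lies in one part; in the converse, distinct $\delta$-components of a colour are at distance greater than $\delta$ by maximality). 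The SAT encoding uses only finite clauses (each ``exactly one colour'' constraint and each forbidden witness is finite), so propositional compactness applies even when $V(G)$ is uncountable. The real content, as you say, is the metric-distortion issue --- an induced subgraph's distances can exceed (even be infinite compared to) those of $G$ --- and your geodesic closure $S^{+}$ handles it exactly right: adding $G$-geodesics for the pairs in $S$ at $G$-distance at most $\delta$ pins the step distances down to their $G$-values, while the endpoint inequality $\dist_H(v_0,v_m)\geq \dist_G(v_0,v_m)>D$ transfers for free since $H$ is a subgraph. One pleasant by-product worth noting: your argument shows that the \emph{same} control function $f$ that works for the finite induced subgraphs works for $G$ itself, with no quantitative loss, which is as strong a conclusion as one could ask for and exactly what the paper needs in its application to linewidth (\cref{LineWidth2Colouring}).
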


\begin{proof}[Proof of \cref{LineWidth2Colouring}]
    Fix $k\in \NN$, and let $(L,\beta)$ be an $L$-decomposition of $G$ for some line $L$, where each bag $\beta(x)$ has independence number at most $k$. It suffices to show that there is a constant $c_k\in \NN$ such that $G$ admits a $2$-colouring in which each monochromatic component has weak diameter less than $c_k$.
    
    Let $\GG$ denote the class of the finite induced subgraphs of $G$. For any graph $G'\in \GG$, the family $(\beta'(x)=\beta(x)\cap V(G')\colon x\in L)$ defines an $L$-decomposition of $G'$ where each bag has independence number at most $k$. Thus $G'$ has tree independence number at most $k$. By \cref{ClassesAsymptotic}, the graph class $\GG$ has asymptotic dimension at most $1$. Applying \cref{Compact}, we conclude that $G$ also has asymptotic dimension at most $1$. It follows from the definition of asymptotic dimension that there exists a constant $c_k\in \NN$ such that $G$ has a $2$-colouring in which each monochromatic component of $G$ has weak diameter less than $c_k$, as required.
\end{proof}

\cref{CharacterisationThmLW} now follows from the next lemma together with \cref{Quasi2kd}. The proof for this lemma is identical to that of \cref{MainLemma}, except that we use \cref{LineWidth2Colouring} in place of \cref{WeakDiamColouring}. We omit the details.

\begin{lem}\label{MainLemmaLINE}
    There exists a function $q\colon \NN^2 \to \NN$ such that, for all $d,k\in \NN$ and line $L$, every graph $G$ that has a $(k,d)$-centred $L$-decomposition is $q(k,d)$-quasi-isometric to a graph that has an $L$-decomposition with width at most $2k-1$. 
\end{lem}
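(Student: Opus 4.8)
The plan is to mirror the structure of \cref{MainLemma} exactly, substituting the line-analogues of each ingredient. The overall strategy is a chain of quasi-isometries: starting from a graph $G$ with a $(k,d)$-centred $L$-decomposition, I would first pass to a graph $G_1$ whose $L$-decomposition has line independence number at most $k$, then pass to a graph $G_2$ whose $L$-decomposition has width at most $2k-1$, and finally compose the two quasi-isometries via \cref{QuasiQuasi}. The key point is that every lemma used in the tree case has a line-decomposition counterpart, with \cref{LineWidth2Colouring} playing the role of \cref{WeakDiamColouring}.

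First I would establish the analogue of \cref{DecompToInd} for lines: given a $(k,d)$-centred $L$-decomposition $(L,\beta)$ of $G$, construct $H$ by adding the edge $uv$ whenever $u,v$ lie in a common bag and $\dist_G(u,v)\leq d$. Then $(L,\beta)$ remains an $L$-decomposition of $H$ (the three defining conditions are preserved since we only add edges within existing bags and leave $\beta$ unchanged), each bag is a union of at most $k$ cliques and hence has independence number at most $k$, so $H$ has line independence number at most $k$; and the identity map is a $d$-quasi-isometry from $G$ to $H$ by the same path-concatenation argument as in \cref{DecompToInd}. The only point requiring a moment's care is that the connectivity argument for single vertices should be read as ``non-empty interval'' rather than ``non-empty connected subtree'', but this is immediate since adding edges does not alter $\beta$.

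Next I would carry out the analogue of \cref{IndToTW}: apply \cref{LineWidth2Colouring} to obtain a partition $\PP$ of $H$ with parts of weak diameter less than $f(k)$ such that $H/\PP$ is bipartite; by \cref{QuasiIsometryObservation}, $H$ is $f(k)$-quasi-isometric to $H/\PP$. Here I need the line-version of \cref{IndependentMinor}, namely that quotients preserve line independence number. I would note that \cref{IndependentMinor} is stated for trees, but its proof is purely local to the decomposition structure and applies verbatim when $T$ is replaced by a line $L$ (one contracts parts and checks that the interval/connectivity and edge-covering conditions survive), so $H/\PP$ inherits an $L$-decomposition of line independence number at most $k$. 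Since $H/\PP$ is bipartite, each of its bags has at most $2k$ vertices, giving width at most $2k-1$. Finally, setting $q(k,d)=(d+2)f(k)$ and combining the $d$-quasi-isometry $G\to H$ with the $f(k)$-quasi-isometry $H\to H/\PP$ through \cref{QuasiQuasi} yields the claim.

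The main obstacle is not any single computation but rather verifying that the auxiliary results quoted for trees (\cref{IndependentMinor} in particular, and implicitly the connectivity bookkeeping in \cref{DecompToInd}) transfer to the line setting, since a line need not be a finite tree and the paper only states these for trees. The substantive new input—that bounded line independence number forces a bipartite quotient with bounded-diameter parts—is exactly \cref{LineWidth2Colouring}, which itself relies on the compactness result \cref{Compact} to lift the finite asymptotic-dimension bound to infinite $G$. Once that lemma is in hand, the remaining steps are the direct line-analogues of the finite arguments and introduce no genuinely new difficulty.
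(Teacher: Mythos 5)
Your proposal is correct and follows exactly the paper's route: the paper proves \cref{MainLemmaLINE} by declaring the argument identical to that of \cref{MainLemma} with \cref{LineWidth2Colouring} substituted for \cref{WeakDiamColouring}, which is precisely your chain of quasi-isometries (line-analogue of \cref{DecompToInd}, then \cref{LineWidth2Colouring} with \cref{QuasiIsometryObservation} and the line-version of \cref{IndependentMinor}, composed via \cref{QuasiQuasi} with $q(k,d)=(d+2)f(k)$). The transfer checks you flag --- that \cref{DecompToInd} and \cref{IndependentMinor} carry over verbatim from trees to lines --- are exactly the details the paper omits, and your verification of them is sound.
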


\subsubsection*{Acknowledgement}
This work was initiated at the 2024 Barbados Graph Theory Workshop held at Bellairs Research Institute in March 2024. Thanks to the organisers and participants for providing a stimulating research environment. Thanks to James Davies for pointing out \cite{NSS2025treewidth}, to Paul Seymour for asking about linewidth, and to Raj Kaul for pointing out a typo in this paper.

{
\fontsize{11pt}{12pt}
\selectfont
	
\hypersetup{linkcolor={red!70!black}}
\setlength{\parskip}{2pt plus 0.3ex minus 0.3ex}

\bibliographystyle{DavidNatbibStyle}
\bibliography{main.bbl}
}

\end{document}